\def\dlim{\displaystyle\lim}
\def\dint{\displaystyle\int}
\def\dsum{\displaystyle\sum}
\def\dprod{\displaystyle\prod}
\def\t{\mathbf{t}}
\def\x{\mathbf{x}}
\def\y{\bar{\mathbf{y}}}
\def\M{\bar{\mathcal{M}}}
\def\H{\mathcal{H}}
\def\<{\left<}
\def\>{\right>}
\def\S{\mathbf{\mathcal{S}}}
\DeclareMathOperator{\im}{Im}
\newtheorem{thm}{Theorem}
\newtheorem{lemma}{Lemma}[section]
\newtheorem{lemma*}{Lemma}
\newtheorem{prop}{Proposition}[section]
\newtheorem{cor}{Corollary}[section]
\begin{document}

\title{Reconstruction of $g=1$ permutation equivariant quantum $K$-invariants}

\author{Dun Tang}

\begin{abstract}
In this paper, we establish an analog of Dijkgraaf-Witten's theorem for $g=1$ invariants in permutation-equivariant quantum K-theory. This result generalizes the findings of \cite{Tang1} and \cite{Tang2}.
\end{abstract}

\maketitle

\tableofcontents

\section*{Introduction}
\

In \cite{Tang1}\cite{Tang2}, we proved two reconstruction theorems for $g=1$ descendant (i.e., containing the universal cotangent bundle $L$) quantum K-invariants, in terms of $g=0$ invariants and $g=1$ invariants with only $1$ point carrying the universal cotangent bundle, as analogs of Dijkgraaf-Witten's theorem.
More precisely, in \cite{Tang1}, we proved such a theorem for non-permutative quantum $K$-theory.
In \cite{Tang2} Section 4, we proved such a theorem for permutation equivariant $g=1$ invariants, but only for the point target space.
In this paper, we prove the most general form of such a theorem -- the reconstruction theorem for permutation equivariant invariants with general K\"ahler targets.

Other known results for $g=1$ quantum K-theory are in the non-permutable theory.
From the computational point of view, Y.P. Lee and F. Qu gave an effective algorithm that computes $\mathcal{F}_1(\t)$ for the point target space \cite{Lee12}.
From a theoretical point of view, Y.C. Chou, L. Herr and Y.P. Lee \cite{Chou23} constructed genus $g$ quantum K invariants of a general target $X$, from permutation equivariant $g=0$ invariants of the orbifold $Sym^{g+1}X = X^{g+1}/S_{g+1}$, where $S_{g+1}$ is the permutation group of $g+1$ elements, that permutes the $g+1$ copies of $X$.

\subsection{Definition of invariants}
\

Let $X$ be a compact K\"ahler manifold.
Let $\Lambda$ be a local $\lambda$-algebra that contains Novikov's variables and let $K=K^0(X) \otimes \Lambda$.
The permutation equivariant K-theoretic Gromov-Witten invariants take as input $\mathbf{t}_r \in \mathcal{K}_+ = K[q^\pm]$ for each $r \in \mathbb{Z}_+$, and take values in $\Lambda$.

We define these invariants as follows \cite{perm9}.
The moduli space $\M_{g,n}(X,d)$ of degree $d$ stable maps from genus $g$ nodal curves with $n$ marked points to $X$ carries a virtual structural sheaf $\mathcal{O}^{vir}_{g,n,d}$ \cite{Lee01}, which is equivariant under the $S_n$ action on $\M_{g,n}(X,d)$ given by renumbering marked points.
At the $i^\text{th}$ marked point, there is the universal cotangent bundle $L_i$, and an evaluation map $ev_i: \M_{g,n}(X,d) \to X$.
Given a permutation $h \in S_n$ with $\ell_r$ $r$-cycles and inputs $\mathbf{t}_{r,k}(q) \in K[q^\pm]$ for each $r$-cycle $k = (k_1,\cdots,k_r)$, we set $T_{r,k} = \otimes_{i=1}^r \bigl(ev^\ast_{k_i}(\mathbf{t}_{r,k})\bigr)\bigl(L_{k_i}\bigr)$.
Then $T_{r,k}$ is an $h$-(orbi-)bundle over $\M_{g,n}(X,d)$, with $h$ acting as $k$, permuting the components in the tensor product.
We define the correlators
\[\left<\mathbf{t}_{1,1}, \cdots, \mathbf{t}_{1,\ell_1}; \cdots , \mathbf{t}_{r,k}, \cdots \right>_{g,\ell,d} 
= \Bigl(\prod_{r=1}^n r^{-\ell_r}\Bigr) \, str_h \, H^\ast\!\bigl(\M_{g,n}(X,d); \,\mathcal{O}^{vir}_{g,n,d} \otimes \bigl(\otimes_{r,k} T_{r,k}\bigr)\bigr).\]
Note that the super-trace on the right-hand side depends only on the type $\ell=(\ell_1,\cdots,\ell_n)$ of $h$.
Thus, it is well-defined.

\subsection{Ancestor-Descendant correspondence in $g=1$}
\

Our primary tool is the permutation equivariant Ancestor-Descendant correspondence \cite{perm7}\cite{Tang2}.

The inputs of the generating functions $\mathcal{F}_1, \bar{\mathcal{F}}_1$ are $\mathbf{t}=(\mathbf{t}_1, \cdots,\mathbf{t}_r, \cdots)$ and $\bar{\mathbf{t}}=(\bar{\mathbf{t}}_1, \cdots,\bar{\mathbf{t}}_r, \cdots), \tau = (\tau_1,\cdots, \tau_r,\cdots)$, with each $\t_r, \bar{\t}_r \in K[q^\pm]$ and $\tau_r \in K$.
We define $\ell!=\prod_r \ell_r!$, $|\ell| = \sum_r r \ell_r$, and generating functions
\[\begin{array}{ll}
\mathcal{F}_1(\mathbf{t}) &= \dsum_{\ell,d}\frac{Q^d}{\ell!}\left<\mathbf{t}_1(L), \cdots, \mathbf{t}_1(L); \cdots , \mathbf{t}_r(L), \cdots \right>_{1,\ell,d},\\
\bar{\mathcal{F}}_1(\bar{\mathbf{t}}) &= \dsum_{d,\ell,\ell_\tau} \frac{Q^d}{\ell!\ell_\tau!}\left<\bar{\mathbf{t}}_1(\bar{L}), \cdots, \bar{\mathbf{t}}_1(\bar{L}), \tau_1,\cdots,\tau_1;\cdots, \bar{\mathbf{t}}_r(\bar{L}), \cdots, \tau_r,\cdots \right>_{1,\ell+\bar{\ell},d},
\end{array}\]
here $\bar{L}$ stands for the counterpart of $L$, obtained by pulling back $L$'s (on $\M_{1,|\ell|}$) along the forgetful map $\M_{1,|\ell+\bar{\ell}|}(X,d) \to \M_{1,|\ell|}$.

Given inputs $A_{1,1},\cdots,A_{1,\ell_1},\cdots,A_{k,i}, \cdots \in K[q^{\pm}]$ and $\tau=(\tau_1,\cdots) \in K^\infty$, where $A_{k,i}$'s are inputs for $k$-cycles, we define generating functions
\[\left<\!\left<A_{1,1},\cdots,A_{1,\ell_1},\cdots,A_{k,i}, \cdots \right>\!\right>_{g,\ell} = \sum_{\bar{\ell},d} \frac{Q^d}{\bar{\ell}!} \left<A_{1,1},\cdots,A_{1,\ell_1},\cdots,A_{k,i}, \cdots;\cdots,\tau_k, \cdots\right>_{g,\ell+\bar{\ell},d}.\]

Pick a basis $\phi_\alpha$ of $K^0(X)$, and set
\[\begin{array}{ll}
(\varphi, \psi) &= \chi(X,\varphi \otimes \psi),\\
G_{\alpha\beta} &= (\phi_\alpha,\phi_\beta)+\left<\!\left< \phi_\alpha, \phi_\beta\right>\!\right>_{0,2_1}.
\end{array}\]
Here $2_1$ stands for two $1$-cycles, and more generally, $r_k$ stands for $r$ $k$-cycles.
Let $[G^{\alpha\beta}]$ be the inverse of the matrix $[G_{\alpha\beta}]$.

Given a function $\mathcal{F}$ with inputs $\tau=(\tau_1,\tau_2,\cdots) \in K^\infty$, define $R_r$ by
\[(R_r\mathcal{F})(\tau_1, \cdots, \tau_k,\cdots) = \mathcal{F}(\tau_r, \cdots, \tau_{kr},\cdots).\]

We define
\[S(q)\phi = \sum_{\alpha,\beta} \left((\phi,\phi_\alpha)+\left<\!\left< \frac{\phi}{1-L/q}, \phi_\alpha\right>\!\right>_{0,2}\right)G^{\alpha\beta}\phi_\beta.\]
Then $S(q)$ (originally defined on $K$) extends $\mathbb{Q}((q))$-linearly to $\mathcal{K}= K \otimes \mathbb{Q}((q))$, which we also denote by $S(q)$. 
We define the $\S$-operator as the block diagonal operator, with diagonal blocks $\S_r = R_r(S)$.

Let $v = \bar{v}$ be the vector with all components $1-q$.
Define $\bar{\t}$ such that $\bar{v}+\bar{\t} = [\S(v+\t)]_+$, the Laurent polynomial part of $\S(v+\t)$.
Let $F_1(\tau) = \mathcal{F}_1(\tau)$, the no-descendant $g=1$ potential with input $\t=\tau$.

\begin{prop} (Ancestor-Descendant correspondence in $g=1$)
\[\mathcal{F}_1(\t) = F_1(\tau) + \mathcal{\bar{F}}_1(\bar{\t}).\]
\end{prop}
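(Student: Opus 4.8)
The plan is to compare descendant and ancestor insertions on the moduli of stable maps through the stabilization map, to resum the rational tails produced by this comparison into the $\S$-operator, and to treat the genus-$1$ unstable strata separately; these last strata will account for the extra term $F_1(\tau)$. First I would fix, for each $m \ge 1$, the contraction map $\pi\colon \M_{1,n}(X,d) \to \M_{1,m}$ that forgets the stable map together with all but the first $m$ marked points and then stabilizes, so that the ancestor line $\bar L_i$ is by definition the $\pi$-pullback of the cotangent line on $\M_{1,m}$. Away from the boundary locus $D_i$ on which the $i$-th point sits on a rational tail contracted by $\pi$ one has $L_i = \bar L_i$, while along $D_i$ the descendant line $L_i$ becomes trivial and $\bar L_i$ restricts to the cotangent line at the attaching node. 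This produces, for every Laurent-polynomial input $f(q)$, an identity rewriting $f(L_i)$ in terms of $f(\bar L_i)$ plus corrections supported on $D_i$, which I would iterate over all marked points to pass from each descendant insertion $\t_r(L)$ to an ancestor insertion together with contributions pushed onto the rational tails.

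Next I would resum the rational tails. Each such tail is a genus-$0$ stable map attached at a single node, and summing over its degree and over the number of auxiliary points it carries converts the descendant data $v+\t$ into $\S(v+\t)$; here the dilaton shift $v = 1-q$ is exactly the insertion bookkeeping a forgotten point. Only the Laurent-polynomial part $[\S(v+\t)]_+$ persists as a legitimate ancestor input, the principal part being reabsorbed into the genus-$0$ tail geometry, and this reproduces the stated change of variables $\bar v + \bar\t = [\S(v+\t)]_+$; the fact that $S(q)$ is built from the genus-$0$ two-point functions and the metric $G_{\alpha\beta}$ is precisely the genus-$0$ ancestor--descendant comparison entering here. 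In the permutation-equivariant setting I would carry this out cycle by cycle: a tail attached at a point belonging to an $r$-cycle is rotated by the cyclic symmetry, so the holomorphic Lefschetz contribution of the corresponding fixed locus introduces the $r$-th Adams operation $\Psi^r$, and the genus-$0$ two-point function so twisted is exactly the block $\S_r = R_r(S)$ of the block-diagonal $\S$-operator.

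It then remains to separate the genus-$1$ core. After the previous two steps the genus-$1$ component carrying its ancestor insertions assembles into $\bar{\mathcal{F}}_1(\bar\t)$, evaluated at the background point $\tau$ (the primary, $q\to 1$, part of the shifted input). The feature special to genus $1$ is that the contraction target $\M_{1,m}$ exists only for $m \ge 1$: configurations in which the genus-$1$ component retains no special marked point --- equivalently the strata that would map to the unstable $\M_{1,0}$ --- are invisible to the ancestor formalism and must be added by hand. I would isolate exactly these terms and check that, after the same tail resummation, they collapse to the no-descendant potential $F_1(\tau)$, so that the two contributions sum to $\mathcal{F}_1(\t) = F_1(\tau) + \bar{\mathcal{F}}_1(\bar\t)$.

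I expect the main obstacle to be the interaction of the full operator-valued genus-$0$ data of a general target $X$ with the permutation-equivariant super-trace. Concretely, one must verify that the Adams operations arising from the $r$-cycle fixed loci combine with the genuine matrix $S(q)$ --- not merely its scalar shadow as in the point-target case of \cite{Tang2} --- to give precisely $\S_r = R_r(S)$, and that the genus-$0$ correction terms $\langle\langle \phi_\alpha, \phi_\beta\rangle\rangle_{0,2_1}$ inside $G_{\alpha\beta}$ are consumed exactly by the node-smoothing contributions with no residue. Ensuring that the genus-$1$ unstable strata reassemble cleanly into $F_1(\tau)$, with no double counting against the ancestor sum, is the remaining delicate point.
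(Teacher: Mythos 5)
The first thing to note is that this paper contains no proof of the Proposition: it is recalled as a known tool, with the argument deferred to \cite{perm7} and \cite{Tang2}, so there is no internal proof to compare yours against. Your sketch reconstructs the standard argument underlying those references --- comparison of $L_i$ with the pulled-back $\bar{L}_i$ across the loci $D_i$ where the $i$-th point lies on a contracted rational component, resummation of the resulting tails into the $\S$-operator with the dilaton shift $v=1-q$ and the truncation $[\S(v+\t)]_+$, cycle-by-cycle bookkeeping for the permutation-equivariant blocks, and the pure-$\tau$ configurations (those whose stabilization would land in the nonexistent $\M_{1,0}$) supplying the extra term $F_1(\tau)$ --- so in approach it is essentially the proof that the paper's sources give.

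Two local points would need repair in a complete write-up. First, it is not true in general that $L_i$ is trivial along $D_i$: a component contracted by stabilization can carry positive degree and further marked points, so $L_i|_{D_i}$ is the tail's own cotangent line at the $i$-th point, and triviality holds only on the deepest strata. It is precisely this self-similarity that makes the correction terms recurse, and the geometric series generated by that recursion is what the kernel $1/(1-L/q)$ in the definition of $S(q)$ resums; your wording hides the step where this recursion is closed. Second, in this paper the $r$-cycle block is \emph{defined} as $\S_r = R_r(S)$, where $R_r$ merely relabels the background inputs $\tau_k \mapsto \tau_{kr}$, the Adams-type twisting being already built into the $r$-cycle correlators through the super-trace; the fixed-locus/Adams-operation mechanism you invoke is how \cite{perm7} produces this block, but matching it against the definition used here is exactly the verification you flag as the ``main obstacle,'' and it cannot be waved through --- it is the substantive content of the permutation-equivariant statement, not a routine check. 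Neither point changes the architecture, but both are where the actual work lies.
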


We briefly explain how we use ancestor-descendant correspondence.
In our computations, we pick $\tau = (\tau_1,\tau_2, \cdots)$ so that $\bar{\t}(1)=0$, which is computed recursively in \cite{Tang2} Section 3.3 (see also Section 1.7 in this paper).
If all $\t_r$'s contain the pullback of $L-1$ from Deligne-Mumford spaces $\M_{1,n}$ along the forgetful map $\M_{1,n+\bar{n}}(X,d) \to \M_{1,n}$, (quite often) the invariants $\mathcal{\bar{F}}_1(\bar{\t})$ could be factorized into two parts for dimension reasons:
\begin{enumerate}
\item
The $K$-theoretical intersections of $L-1$ over the fixed loci of various permutation group action on $\M_{1,n}$;
\item
The no-descendant invariants contributed from a fiber of the forgetful map.
\end{enumerate}
For more details, please refer to Section 2.1.

\subsection{Statement of the result}
\

Let $\bar{\x}_r=1-q+\bar{\t}_r$, $t_{r,0} = \t_r(1)$ and $t_{r,0}^1$ be the coefficient of the $1$ ($\in K^0(X)$) component.
Let $\bar{\t}^{new}_2(q)$ be $-\bar{\t}_2(q)/(1-q)$ with $q-1$ changed to $q+1$, and
\[\begin{array}{ll}
\y_r&= \bar{\x}_1(q)+\dsum_{\alpha,\beta}\phi_\alpha G_1^{\alpha\beta}\<\!\<\phi_\beta,\dfrac{\partial \tau_r}{\partial t_{r,0}^1}-1, \bar{\x}_1(q)\>\!\>_{0,1_1+1_r+1_1},\\
\y_2^L & = \bar{\x}_1(q)+\dsum_{\alpha,\beta}\phi_\alpha G_1^{\alpha\beta}\<\!\<\phi_\beta,\bar{\t}^{new}_2(q), \bar{\x}_1(q)\>\!\>_{0,1_1+1_2+1_1}.
\end{array}\]
Define
\[\begin{array}{ll}
\mathcal{F}_{1,2}^{perm} (x) = & \dfrac1{24} \<\!\<\dfrac{\y_2^L(x^{-1})}{1-x\bar{L}},\y_2(x^{-1}),\y_2(x^{-1}),\y_2(x^{-1})\>\!\>_{1,4_1}; \\
\mathcal{F}_{1,3}^{perm} (x)  = & \dfrac16 \<\!\<\dfrac{\y_3(x^{-1})}{1-x\bar{L}}, \bar{\x}_1(x^{-1}),\bar{\x}_1(x^{-1})\>\!\>_{1,3_1}; \\
\mathcal{F}_{1,4}^{perm} (x) = & \dfrac14 \<\!\<\dfrac{\y_4(x^{-1})}{1-x\bar{L}}, \bar{\x}_1(x^{-1}), \bar{\x}_2(x^{-2})\>\!\>_{1,2_1+1_2}; \\
\mathcal{F}_{1,6}^{perm} (x) = &\dfrac16 \<\!\<\dfrac{\y_6(x^{-1})}{1-x\bar{L}}, \bar{\x}_2(x^{-2}), \bar{\x}_3(x^{-3}) \>\!\>_{1,1_1+1_2+1_3}.
\end{array}\]

We remark that the $\ell$'s of the double-bracket correlators in $\mathcal{F}_{1, M}^{perm}$ record the number of ramification points (carrying $\x_r(x^{-1})$'s) of elliptic curves under the order $M$ permutation of marked points.

\begin{thm}
The genus $1$ generating function 
\[\mathcal{F}_1(\t) = F_1(\tau) + \frac1{24} \log\left(\frac{\partial \tau_1}{\partial t_{1,0}}\right) + \sum_{M=2,3,4,6} \sum_{a = 0,\infty}Res_a \mathcal{F}_{1,M}^{perm}(x) \frac{dx}x.\]
\end{thm}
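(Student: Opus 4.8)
The plan is to deduce the formula from the Ancestor--Descendant correspondence of the preceding Proposition, which already gives $\mathcal{F}_1(\t)=F_1(\tau)+\bar{\mathcal{F}}_1(\bar{\t})$; what remains is to identify $\bar{\mathcal{F}}_1(\bar{\t})$ with $\frac1{24}\log(\partial\tau_1/\partial t_{1,0})$ plus the residue sum. First I would fix $\tau=(\tau_1,\tau_2,\dots)$ by the recursion of \cite{Tang2} (recalled in Section~1.7) so that $\bar{\t}(1)=0$. This normalization is exactly what forces every ancestor input $\bar{\t}_r$ to carry the pulled-back factor $\bar L-1$, which is the hypothesis under which, as explained in the Introduction, $\bar{\mathcal{F}}_1(\bar{\t})$ factorizes along the forgetful map $\pi:\M_{1,n+\bar n}(X,d)\to\M_{1,n}$: one separates the fiberwise no-descendant contributions, which assemble the glued inputs $\y_r,\y_2^L$ and $\bar{\x}_r$ out of $g=0$ correlators and the $\S$-operator, from the $K$-theoretic intersection of the $(\bar L-1)$-classes on the base $\M_{1,n}$.

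The heart of the argument is the evaluation of these base intersections together with their permutation super-traces. For a permutation $h$ of the $n$ ancestor points, $str_h$ is computed by the holomorphic Lefschetz fixed-point formula, localizing onto the sublocus of $\M_{1,n}$ of curves $(E,p_1,\dots,p_n)$ admitting an automorphism $\sigma$ whose order equals that of $h$ and which induces $h$ on the marked points. In genus $1$ such a $\sigma$ is an automorphism of the underlying elliptic curve, so its order lies in $\{1,2,3,4,6\}$; I would organize the whole super-trace as a sum over these five values of $M$. One then checks that the induced permutation of the marked points has exactly the cycle type recorded by the $\ell$'s --- $4_1$, $3_1$, $2_1+1_2$ and $1_1+1_2+1_3$ for $M=2,3,4,6$ --- matching, respectively, the four $2$-torsion points, the three order-$3$ fixed points, and the ramification profiles of the quotients $E\to E/\sigma=\mathbb{P}^1$ for the order-$4$ and order-$6$ automorphisms.

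For the value $M=1$ (the identity on a generic $E$) the localization contribution is the self-intersection of the $(\bar L-1)$-class on $\M_{1,1}$, whose $K$-theoretic weight is the analog of $\int_{\M_{1,1}}\psi=\frac1{24}$; combined with the Jacobian of the change of variables relating $\t_1$ to $\tau_1$ this produces precisely $\frac1{24}\log(\partial\tau_1/\partial t_{1,0})$. For each $M\in\{2,3,4,6\}$, the automorphism $\sigma$ acts on the cotangent line $\bar L$ at every ramification point by a definite root of unity; recording this eigenvalue as the formal variable $x$ turns the $q$-dependence of the ancestor inputs into the substitution $\y_r(x^{-1})$ at the distinguished order-$M$ ramification point and $\bar{\x}_r(x^{-r})$ at an $r$-cycle orbit, while the cotangent line at the point supporting the $\S$-operator insertion contributes the geometric factor $1/(1-x\bar L)$. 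The gluing terms at the nodes inherited from the boundary of $\M_{1,n}$, together with the action of the powers of $\sigma$ on the node-adjacent lines, are what reorganize into the correction terms in the definitions of $\y_r$, $\y_2^L$ and $\bar{\t}^{new}_2$; in particular the replacement $q-1\mapsto q+1$ defining $\bar{\t}^{new}_2$ encodes the eigenvalue $-1$ of the involution in the $M=2$ sector. Finally, summing over all eigenvalues of $\sigma$ is a sum of residues at roots of unity, which by the residue theorem on $\mathbb{P}^1_x$ is captured by the residues at $x=0$ and $x=\infty$, yielding $\sum_{a=0,\infty}\mathrm{Res}_a\,\mathcal{F}_{1,M}^{perm}(x)\,dx/x$.

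The main obstacle I anticipate is the bookkeeping in this last step: tracking the eigenvalue of $\sigma$ on every relevant cotangent and smoothing line --- at the ramification points, at the distinguished $\S$-insertion, and at each node of the boundary of $\M_{1,n}$ --- and verifying that the resulting weighted fiber integrals collapse exactly into the correlators $\mathcal{F}_{1,M}^{perm}$ with the prescribed normalizations $\frac1{24},\frac16,\frac14,\frac16$, which are the $1/|\mathrm{Aut}|$ factors times the $1/\ell!$ conventions. The mixed-cycle cases $M=4$ and $M=6$, where $\sigma$ simultaneously fixes, transposes and $3$-cycles marked points, are the most delicate, since there one must reconcile the eigenvalues of the several powers of $\sigma$ acting on the distinct orbits with the multi-variable substitutions $\bar{\x}_1(x^{-1}),\bar{\x}_2(x^{-2}),\bar{\x}_3(x^{-3})$ appearing in the definitions.
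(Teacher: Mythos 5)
Your overall strategy coincides with the paper's: start from the Ancestor--Descendant correspondence, normalize $\tau$ so that $\bar{\t}(1)=0$, localize the super-traces onto loci of curves with automorphisms of order $M\in\{1,2,3,4,6\}$ (this is the paper's vanishing theorem, Section 1.4, combined with the Kawasaki-type formula of Section 1.3), match cycle types with the table, and convert the sum over roots of unity into residues at $0,\infty$. However, what you defer as ``bookkeeping'' is in fact the substantive content of the proof, and your sketch mischaracterizes the mechanism at two decisive points. First, the four sectors $M=2,3,4,6$ are \emph{not} uniform: by the non-vanishing table, the order-$2$ sector alone admits first-derivative inputs --- either one marked point carrying $\bar{x}'_{1,-1}(-q-1)$ (Case 2a) or one pair carrying $\bar{t}'_{2,1}(q-1)^2$ (Case 2b), in addition to the generic Case 2c. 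The paper must compute these three subcases separately (Case 2c alone requires the Weierstrass-model map $\M^2_{\ell_2}(X,d)\to\M_{0,(3+\ell_2)_1}$ and the cotangent-line comparison $L_{2i+3}L_{2i+4}-1 = 2(\pi^\ast(L_{i+3}-1)+i_\ast\mathcal{O}_{D_i})+\cdots$), and then recombine them via a dedicated lemma. This recombination is exactly why the first slot of $\mathcal{F}^{perm}_{1,2}$ carries $\y_2^L$, built from $\bar{\t}^{new}_2$, whereas the sectors $M=3,4,6$ need only $\y_M$ and $\bar{\x}_r$; your explanation of $\bar{\t}^{new}_2$ as merely ``encoding the eigenvalue $-1$'' misses that it packages the Taylor coefficients $\bar{t}_{2,1},\bar{t}'_{2,1}$ of the derivative subcases. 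Relatedly, the factor $1/(1-x\bar L)$ is not the cotangent line at an $\S$-insertion point, as you guess: in the paper it is the device by which taking $Res_{-1}$ replaces each power of $(-q-1)$ in the expansion $\y^L_2(q)=\bar{x}^{2c}_{1,-1}+(\bar{x}^{2a}_{1,-1}+\bar{x}^{2b}_{1,-1})(-q-1)+O((-q-1)^2)$ by $(-\bar L-1)$ in the first input, thereby collecting Cases 2a, 2b, 2c into a single residue.

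Second, your appeal to ``gluing terms at the nodes'' to produce the corrections in $\y_r$ is too vague to carry the argument. The actual mechanism is the chain proposition (Proposition 1.2): contributions of chains of rational components attached at the ramification points are summed as a geometric series $\sum_{\ell_r\geq 0}A_r^{\ell_r}=(I-A_r)^{-1}=\bigl[\partial\tau_r^\alpha/\partial t_{r,0}^\beta\bigr]$, which is what makes the factor $\partial\tau_r/\partial t^1_{r,0}-1$ appear inside the $g=0$ correlator defining $\y_r$; the same matrix identity, in the $r=1$ sector, is what produces $\frac1{24}\log\det(\partial\tau_1/\partial t_{1,0})$ (via \cite{Tang1}), not a ``self-intersection of $(\bar L-1)$ on $\M_{1,1}$ combined with a Jacobian.'' Without the subcase analysis of the order-$2$ sector, the chain-summation identity, and the counting arguments (e.g.\ the $(\ell_2+1)!$ configuration count that converts the $1/\ell_2!$ weights into the coefficient $\ell_2+1$), the theorem's precise form --- in particular the appearance of $\y_2^L$ and the normalizations $\frac1{24},\frac16,\frac14,\frac16$ --- cannot be derived, so the proposal as it stands has a genuine gap.
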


\section*{Acknowledgments}
\

The author thanks Professor Alexander Givental for introducing him to the $g=1$ quantum K-theory, suggesting related problems, and providing patience and guidance throughout the process.

\section{Collection of known results}
\

We recollect the following facts from \cite{Tang1}\cite{Tang2}.

\subsection{Basic theory}
\

For permutation equivariant quantum K-invariants, we also have the String, Dilaton, and WDVV equations regarding inputs for cycles of length $1$ \cite{Lee01}\cite{perm7} (for our version, see \cite{Tang2} Section 1).

\begin{enumerate}
\item
(String equation)
Set 
\[D\t(q)=\frac{\t(q)-\t(1)}{q-1}.\]
Then
\[\begin{array}{ll}
&\left<1, \t_{1,1}(L), \cdots, \t_{1,\ell_1}(L); \cdots, \t_{r,k}(L), \cdots \right>_{0,1_1+\ell,d} \\
=& \left<\t_{1,1}(L), \cdots, \t_{1,\ell_1}(L); \cdots, \t_{r,k}(L), \cdots \right>_{0,\ell,d} \\
+& \dsum_{i=1}^{\ell_1} \left<\t_{1,1}(L), \cdots, D\t_{1,i}(L) \cdots, \t_{1,\ell_1}(L); \cdots, \t_{r,k}(L), \cdots \right>_{0,\ell,d}.
\end{array}\]
\item
(Dilaton equation)
\[\begin{array}{ll}
&\left<L-1, \t_{1,1}(L), \cdots, \t_{1,\ell_1}(L); \cdots, \t_{k,i}(L), \cdots \right>_{0,1_1+\ell,d} \\
=& (\ell_1-2)\left<\t_{1,1}(L), \cdots, \t_{1,\ell_1}(L); \cdots, \t_{k,i}(L), \cdots \right>_{0,\ell,d}.
\end{array}\]
\item
(WDVV equation)
For all $\varphi,\psi \in K$,
\[\begin{array}{ll}
&(\varphi,\psi)+(1-xy)\left<\!\left< \dfrac{\varphi}{1-xL}, \dfrac{\psi}{1-yL}\right>\!\right>_{0,2_1} \\
=& \dsum_{\alpha,\beta} \left((\varphi,\phi_\alpha)+\left<\!\left< \dfrac{\varphi}{1-xL}, \phi_\alpha\right>\!\right>_{0,2_1}\right)G^{\alpha\beta}\left((\phi_\beta,\psi)+\left<\!\left< \phi_\beta, \dfrac{\psi}{1-yL}\right>\!\right>_{0,2_1}\right).
\end{array}\]
\end{enumerate}

\subsection{The permutation equivariant splitting axiom}
\

\tikzset{every picture/.style={line width=0.75pt}}
\begin{tikzpicture}[x=0.75pt,y=0.75pt,yscale=-1,xscale=1]
\draw    (14.33,154) .. controls (67.33,111) and (102.39,117.83) .. (134.86,141.92) .. controls (167.33,166) and (180.33,173) .. (222.33,143) ;
\draw  [color={rgb, 255:red, 0; green, 0; blue, 0 }  ,draw opacity=1 ][line width=3.75]  (25.24,143.83) .. controls (25.24,142.5) and (26.7,141.42) .. (28.49,141.42) .. controls (30.29,141.42) and (31.74,142.5) .. (31.74,143.83) .. controls (31.74,145.16) and (30.29,146.23) .. (28.49,146.23) .. controls (26.7,146.23) and (25.24,145.16) .. (25.24,143.83) -- cycle ;
\draw    (64,42) -- (89.33,86) ;
\draw    (75,91) -- (100.33,135) ;
\draw    (82,56) -- (82,114) ;
\draw  [color={rgb, 255:red, 208; green, 2; blue, 27 }  ,draw opacity=1 ][line width=3.75]  (67.24,53.83) .. controls (67.24,52.5) and (68.7,51.42) .. (70.49,51.42) .. controls (72.29,51.42) and (73.74,52.5) .. (73.74,53.83) .. controls (73.74,55.16) and (72.29,56.23) .. (70.49,56.23) .. controls (68.7,56.23) and (67.24,55.16) .. (67.24,53.83) -- cycle ;
\draw    (26,46) -- (51.33,90) ;
\draw    (37,95) -- (62.33,139) ;
\draw    (44,60) -- (44,118) ;
\draw  [color={rgb, 255:red, 208; green, 2; blue, 27 }  ,draw opacity=1 ][line width=3.75]  (29.24,57.83) .. controls (29.24,56.5) and (30.7,55.42) .. (32.49,55.42) .. controls (34.29,55.42) and (35.74,56.5) .. (35.74,57.83) .. controls (35.74,59.16) and (34.29,60.23) .. (32.49,60.23) .. controls (30.7,60.23) and (29.24,59.16) .. (29.24,57.83) -- cycle ;
\draw    (157,76) -- (182.33,120) ;
\draw    (168,125) -- (193.33,169) ;
\draw    (175,90) -- (175,148) ;
\draw  [color={rgb, 255:red, 208; green, 2; blue, 27 }  ,draw opacity=1 ][line width=3.75]  (160.24,87.83) .. controls (160.24,86.5) and (161.7,85.42) .. (163.49,85.42) .. controls (165.29,85.42) and (166.74,86.5) .. (166.74,87.83) .. controls (166.74,89.16) and (165.29,90.23) .. (163.49,90.23) .. controls (161.7,90.23) and (160.24,89.16) .. (160.24,87.83) -- cycle ;
\draw  [color={rgb, 255:red, 74; green, 144; blue, 226 }  ,draw opacity=1 ][line width=3.75]  (52.24,129.83) .. controls (52.24,128.5) and (53.7,127.42) .. (55.49,127.42) .. controls (57.29,127.42) and (58.74,128.5) .. (58.74,129.83) .. controls (58.74,131.16) and (57.29,132.23) .. (55.49,132.23) .. controls (53.7,132.23) and (52.24,131.16) .. (52.24,129.83) -- cycle ;
\draw  [color={rgb, 255:red, 74; green, 144; blue, 226 }  ,draw opacity=1 ][line width=3.75]  (90.24,124.83) .. controls (90.24,123.5) and (91.7,122.42) .. (93.49,122.42) .. controls (95.29,122.42) and (96.74,123.5) .. (96.74,124.83) .. controls (96.74,126.16) and (95.29,127.23) .. (93.49,127.23) .. controls (91.7,127.23) and (90.24,126.16) .. (90.24,124.83) -- cycle ;
\draw  [color={rgb, 255:red, 74; green, 144; blue, 226 }  ,draw opacity=1 ][line width=3.75]  (185.24,159.83) .. controls (185.24,158.5) and (186.7,157.42) .. (188.49,157.42) .. controls (190.29,157.42) and (191.74,158.5) .. (191.74,159.83) .. controls (191.74,161.16) and (190.29,162.23) .. (188.49,162.23) .. controls (186.7,162.23) and (185.24,161.16) .. (185.24,159.83) -- cycle ;
\draw    (274.33,183) .. controls (327.33,140) and (362.39,146.83) .. (394.86,170.92) .. controls (427.33,195) and (440.33,202) .. (482.33,172) ;
\draw  [color={rgb, 255:red, 0; green, 0; blue, 0 }  ,draw opacity=1 ][line width=3.75]  (284.24,172.83) .. controls (284.24,171.5) and (285.7,170.42) .. (287.49,170.42) .. controls (289.29,170.42) and (290.74,171.5) .. (290.74,172.83) .. controls (290.74,174.16) and (289.29,175.23) .. (287.49,175.23) .. controls (285.7,175.23) and (284.24,174.16) .. (284.24,172.83) -- cycle ;
\draw    (321,4) -- (346.33,48) ;
\draw    (332,53) -- (357.33,97) ;
\draw    (339,18) -- (339,76) ;
\draw  [color={rgb, 255:red, 208; green, 2; blue, 27 }  ,draw opacity=1 ][line width=3.75]  (324.24,15.83) .. controls (324.24,14.5) and (325.7,13.42) .. (327.49,13.42) .. controls (329.29,13.42) and (330.74,14.5) .. (330.74,15.83) .. controls (330.74,17.16) and (329.29,18.23) .. (327.49,18.23) .. controls (325.7,18.23) and (324.24,17.16) .. (324.24,15.83) -- cycle ;
\draw    (283,8) -- (308.33,52) ;
\draw    (294,57) -- (319.33,101) ;
\draw    (301,22) -- (301,80) ;
\draw  [color={rgb, 255:red, 208; green, 2; blue, 27 }  ,draw opacity=1 ][line width=3.75]  (286.24,19.83) .. controls (286.24,18.5) and (287.7,17.42) .. (289.49,17.42) .. controls (291.29,17.42) and (292.74,18.5) .. (292.74,19.83) .. controls (292.74,21.16) and (291.29,22.23) .. (289.49,22.23) .. controls (287.7,22.23) and (286.24,21.16) .. (286.24,19.83) -- cycle ;
\draw    (414,38) -- (439.33,82) ;
\draw    (425,87) -- (450.33,131) ;
\draw    (432,52) -- (432,110) ;
\draw  [color={rgb, 255:red, 208; green, 2; blue, 27 }  ,draw opacity=1 ][line width=3.75]  (417.24,49.83) .. controls (417.24,48.5) and (418.7,47.42) .. (420.49,47.42) .. controls (422.29,47.42) and (423.74,48.5) .. (423.74,49.83) .. controls (423.74,51.16) and (422.29,52.23) .. (420.49,52.23) .. controls (418.7,52.23) and (417.24,51.16) .. (417.24,49.83) -- cycle ;
\draw  [color={rgb, 255:red, 74; green, 144; blue, 226 }  ,draw opacity=1 ][line width=3.75]  (309.24,91.83) .. controls (309.24,90.5) and (310.7,89.42) .. (312.49,89.42) .. controls (314.29,89.42) and (315.74,90.5) .. (315.74,91.83) .. controls (315.74,93.16) and (314.29,94.23) .. (312.49,94.23) .. controls (310.7,94.23) and (309.24,93.16) .. (309.24,91.83) -- cycle ;
\draw  [color={rgb, 255:red, 74; green, 144; blue, 226 }  ,draw opacity=1 ][line width=3.75]  (347.24,86.83) .. controls (347.24,85.5) and (348.7,84.42) .. (350.49,84.42) .. controls (352.29,84.42) and (353.74,85.5) .. (353.74,86.83) .. controls (353.74,88.16) and (352.29,89.23) .. (350.49,89.23) .. controls (348.7,89.23) and (347.24,88.16) .. (347.24,86.83) -- cycle ;
\draw  [color={rgb, 255:red, 74; green, 144; blue, 226 }  ,draw opacity=1 ][line width=3.75]  (442.24,121.83) .. controls (442.24,120.5) and (443.7,119.42) .. (445.49,119.42) .. controls (447.29,119.42) and (448.74,120.5) .. (448.74,121.83) .. controls (448.74,123.16) and (447.29,124.23) .. (445.49,124.23) .. controls (443.7,124.23) and (442.24,123.16) .. (442.24,121.83) -- cycle ;
\draw  [color={rgb, 255:red, 74; green, 144; blue, 226 }  ,draw opacity=1 ][line width=3.75]  (309.24,158.83) .. controls (309.24,157.5) and (310.7,156.42) .. (312.49,156.42) .. controls (314.29,156.42) and (315.74,157.5) .. (315.74,158.83) .. controls (315.74,160.16) and (314.29,161.23) .. (312.49,161.23) .. controls (310.7,161.23) and (309.24,160.16) .. (309.24,158.83) -- cycle ;
\draw  [color={rgb, 255:red, 74; green, 144; blue, 226 }  ,draw opacity=1 ][line width=3.75]  (347.24,152.83) .. controls (347.24,151.5) and (348.7,150.42) .. (350.49,150.42) .. controls (352.29,150.42) and (353.74,151.5) .. (353.74,152.83) .. controls (353.74,154.16) and (352.29,155.23) .. (350.49,155.23) .. controls (348.7,155.23) and (347.24,154.16) .. (347.24,152.83) -- cycle ;
\draw  [color={rgb, 255:red, 74; green, 144; blue, 226 }  ,draw opacity=1 ][line width=3.75]  (442.24,190.83) .. controls (442.24,189.5) and (443.7,188.42) .. (445.49,188.42) .. controls (447.29,188.42) and (448.74,189.5) .. (448.74,190.83) .. controls (448.74,192.16) and (447.29,193.23) .. (445.49,193.23) .. controls (443.7,193.23) and (442.24,192.16) .. (442.24,190.83) -- cycle ;

\draw (13,52.4) node [anchor=north west][inner sep=0.75pt]    {$1$};
\draw (49,48.4) node [anchor=north west][inner sep=0.75pt]    {$2$};
\draw (175,71.4) node [anchor=north west][inner sep=0.75pt]    {$r$};
\draw (108,86.4) node [anchor=north west][inner sep=0.75pt]    {$\cdots $};
\draw (242,100.4) node [anchor=north west][inner sep=0.75pt]    {$=$};
\draw (270,14.4) node [anchor=north west][inner sep=0.75pt]    {$1$};
\draw (306,10.4) node [anchor=north west][inner sep=0.75pt]    {$2$};
\draw (432,33.4) node [anchor=north west][inner sep=0.75pt]    {$r$};
\draw (365,48.4) node [anchor=north west][inner sep=0.75pt]    {$\cdots $};
\draw (370,113.4) node [anchor=north west][inner sep=0.75pt]    {$+$};
\draw (392,152.4) node [anchor=north west][inner sep=0.75pt]    {$\ddots $};
\draw (86,188.4) node [anchor=north west][inner sep=0.75pt]    {$Z$};
\draw (519,71.4) node [anchor=north west][inner sep=0.75pt]    {$Z_{2}$};
\draw (522,172.4) node [anchor=north west][inner sep=0.75pt]    {$Z_{1}$};

\end{tikzpicture}

Colors when printed in black and white:

\tikzset{every picture/.style={line width=0.75pt}}
\begin{tikzpicture}[x=0.75pt,y=0.75pt,yscale=-1,xscale=1]
\draw  [line width=3.75]  (23,21.67) .. controls (23,20.19) and (24.19,19) .. (25.67,19) .. controls (27.14,19) and (28.33,20.19) .. (28.33,21.67) .. controls (28.33,23.14) and (27.14,24.33) .. (25.67,24.33) .. controls (24.19,24.33) and (23,23.14) .. (23,21.67) -- cycle ;
\draw  [color={rgb, 255:red, 208; green, 2; blue, 27 }  ,draw opacity=1 ][fill={rgb, 255:red, 255; green, 255; blue, 255 }  ,fill opacity=1 ][line width=3.75]  (113,21.33) .. controls (113,19.86) and (114.19,18.67) .. (115.67,18.67) .. controls (117.14,18.67) and (118.33,19.86) .. (118.33,21.33) .. controls (118.33,22.81) and (117.14,24) .. (115.67,24) .. controls (114.19,24) and (113,22.81) .. (113,21.33) -- cycle ;
\draw  [color={rgb, 255:red, 74; green, 144; blue, 226 }  ,draw opacity=1 ][line width=3.75]  (193,20.33) .. controls (193,18.86) and (194.19,17.67) .. (195.67,17.67) .. controls (197.14,17.67) and (198.33,18.86) .. (198.33,20.33) .. controls (198.33,21.81) and (197.14,23) .. (195.67,23) .. controls (194.19,23) and (193,21.81) .. (193,20.33) -- cycle ;
\draw (43,12.33) node [anchor=north west][inner sep=0.75pt]   [align=left] {black};
\draw (133,12) node [anchor=north west][inner sep=0.75pt]   [align=left] {red};
\draw (209,13) node [anchor=north west][inner sep=0.75pt]   [align=left] {blue};
\end{tikzpicture}

We decompose $Z=Z_1 \times_{\Delta^r} Z_2$ as in the figure above.
Let $h_{1,r}, h_{2,r}$ be the cyclic permutations of the blue marked points, defined according to the permutation of components containing the red points.
Let $h_1,h_2$ be the action of $h$ on the other marked points in $Z_1$ and $Z_2$.
Then the induced $h_1h_2$-action on $Z=Z_1 \times_{\Delta^r}Z_2$ is $h$.
Let $V_1,V_2$ be (virtual) $h_1,h_2$-bundles over $Z_1,Z_2$ and $V = V_1 \boxtimes V_2$.
Use $ev_{1,i}$ and $ev_{2,i}$ for the evaluation maps at marked points permuted by $h_{1,r}$ and $h_{2,r}$, respectively.

Given a function $\mathcal{F}$ with inputs $\tau=(\tau_1,\tau_2,\cdots) \in K^\infty$, recall that we defined $R_r$ by shifting $\tau$ inputs:
\[(R_r\mathcal{F})(\tau_1, \cdots, \tau_k,\cdots) = \mathcal{F}(\tau_r, \cdots, \tau_{kr},\cdots).\]

We define $G_r=R_rG$.
Then
\[str_{h} H^\ast (Z,V) = \sum_{\alpha,\beta} G_r^{\alpha\beta} str_{h_1h_{1,r}} H^\ast (Z_1,V_1 \cdot \prod_i ev_{1,i}^\ast \phi_\alpha) \cdot str_{h_2h_{2,r}} H^\ast (Z_2,V_2 \cdot \prod_i ev_{2,i}^\ast \phi_\beta).\]

\subsection{A formula for super-traces}
\

We recall the following proposition from \cite{perm9}.

Let $h$ be a generator of a cyclic group $H$, $\mathcal{M}$ an $H$-orbifold, and $V \to \mathcal{M}$ an orbi-bundle.
The super-trace $str_h H^\ast(\mathcal{M}, V)$ can be expressed in the following two ways.

\begin{enumerate}
\item
Set $E_h = \sum_\lambda \lambda^{-1} \mathbb{C}_{\lambda}$, where $\mathbb{C}_\lambda$ is the irreducible representation of $H$ on which $h$ acts as $\lambda$.
Then $str_h H^\ast(\mathcal{M}, V)$ equals the holomorphic Euler characteristic
\[\chi\left(\mathcal{M}/H, (V \cdot E_h)/H\right) = \bar{p}_\ast \left((V \cdot E_h)/H\right),\]
where $\bar{p}$ is the map from $\mathcal{M}/H$ to a point.
\item
The Kawasaki-Riemann-Roch type integral
\[\chi^{fake}\left(I^h\mathcal{M}, str_{\tilde{h}} \frac{V}{\wedge^\ast \tilde{N}^\ast_{I^h\mathcal{M}/\mathcal{M}}}\right).\]
Here, the fake Euler characteristic is defined by 
\[\chi^{fake}(M,V) = \int_{[M]} ch(V) \cdot td (M).\]

The (virtual) orbifold $I^h\mathcal{M}$ is interpreted as follows.
In $I(\mathcal{M}/H)$, each stratum is characterized as the lift of some element $h^k \in H$ with respect to a stratum in $I\mathcal{M}$ labeled by $g$.
We label this stratum by $\tilde{h} = (g,h^k)$.
Then $I^h\mathcal{M}$ is the sub-orbifold of $I\mathcal{M}$ corresponding to the strata in $I(\mathcal{M}/H)$ associated specifically with $h=h^1$.

Locally, one can describe $I^h\mathcal{M}$ as follows.
Take a point $x \in \mathcal{M}^h$ and choose a local chart $U/G(x)$ centered at $x$.
The action of $h$ lifts in $|G(x)|$-ways to automorphisms of $U$.
Each such lift fixes some points in $U$.
Then, near $x$, $I^h\mathcal{M}$ is obtained by taking the union of the these fixed loci and quotienting by the action of $H$.
\end{enumerate}

We refer to each term in the fake Euler characteristics as the {\em fake super-trace}:
\[str_{\tilde{h}}^{fake} H^\ast(\mathcal{M},V) = \chi^{fake}\left(\mathcal{M}^h, str_{\tilde{h}} \frac{V}{\wedge^\ast \tilde{N}^\ast_{\mathcal{M}^h/\mathcal{M}}}\right).\]

\subsection{A vanishing theorem and non-vanishing cases}
\

We say that an $h$-invariant nodal curve is {\em balanced} if it lies in the closure of smooth curves with $h$-action.
\cite{Tang2} Lemmas 3.1 and 4.1 are as follows.

\begin{enumerate}
\item
Assume that all $\bar{\t}_{r,k}$ vanish at $1$, except possibly for $\bar{\t}_{1,1},\bar{\t}_{1,2}$.
Then the correlator 
\[\left<\!\left<\bar{\mathbf{t}}_{1,1}(\bar{L}), \cdots, \bar{\mathbf{t}}_{1,\ell_1}(\bar{L}); \cdots, \bar{\mathbf{t}}_{r,k}(\bar{L}), \cdots\right>\!\right>_{0,\ell}\]
vanishes.
\item
The pre-image of unbalanced curves under the forgetful map $ft: \M_{1,n+\bar{n}}(X,d) \to \M_{1,n}$ does not contribute to $\bar{\mathcal{F}}_1(\bar{\t})$.
\end{enumerate}

All strata that contributes non-trivially to $\bar{\mathcal{F}}_1(\bar{\t})$ are listed in the following table:

\begin{table}[H]
\centering       
\begin{tabular}{c | c | c | c | c}                       
\# & $ord(h)$ & $\M$ & Balanced curves in $\M^h$ & Components in $ft^{-1}(\M_0^M)$ \\ 
\hline\hline
&&&&\\
$1$ & $1$ & $\M_{1,(\ell_1)_1} (X,d)$ & $\M_{\ell_1}^1(X,d)$ & $\M^1_{1,(\ell_1)_1} (X,d)$ \\
$2$ & $2$ & $\M_{1,4_1+(\ell_2)_2} (X,d)$ & $\M_{\ell_2}^2 (X,d)$ & $\M^{-1}_{1,4_1+(\ell_2)_2} (X,d)$ \\
$3$ & $4$ & $\M_{1,2_1+1_2+(\ell_4)_4} (X,d)$ & $\M_{\ell_4}^4 (X,d)$ & $\M_{1,2_1+1_2+(\ell_4)_4}^{\pm i} (X,d)$ \\
$4$ & $6$ & $\M_{1,1_1+1_2+1_3+(\ell_6)_6} (X,d)$ & $\M_{\ell_6}^6 (X,d)$ & $\M_{1,1_1+1_2+1_3+(\ell_6)_6}^{\omega^\pm} (X,d)$ \\
$5$ & $3$ & $\M_{1,3_1+(\ell_3)_3} (X,d)$ & $\M_{\ell_3}^3 (X,d)$ & $\M_{1,3_1+(\ell_3)_3}^{\omega^{\pm 2}} (X,d)$ \\ 
\end{tabular} 
\end{table} 

The superscripts in the last column indicate the eigenvalue of the symmetry acting on the cotangent spaces of level $1$ marked points on base curves.

We omit $(X,d)$ if $X=pt$ and $d=0$.

We expand
\[\begin{array}{llll}
\bar{\t}_r(q) & =\bar{t}_{r,1} &+ \bar{t}'_{r,1}(q-1) &+ O((q-1)^2);\\
\bar{\x}_r(q) & =\bar{x}_{r,\zeta} &+ \bar{x}'_{r,\zeta}(q/\zeta-1) &+ O((q/\zeta-1)^2).
\end{array}\]

All other super-traces vanish except for the following cases:
\begin{table}[htbp]
\centering       
\begin{tabular}{c | c | c}                       
Case & $ord(h)$ & Inputs \\
\hline\hline
&&\\
$1$ & $1$ & $\bar{t}_{1,1} \cdot (q-1)$  \\   
$2a$ & $2$ & In general $\bar{x}_{1,-1}, \bar{t}_{2,1}(q-1)$, one point $\bar{x}'_{1,-1}(-q-1)$ \\
$2c$ & $2$ & In general $\bar{x}_{1,-1}, \bar{t}_{2,1}(q-1)$, one pair $\bar{t}'_{2,1}(q-1)^2$ \\
$2b$ & $2$ & $\bar{x}_{1,-1}, \bar{t}_{2,1}(q-1)$\\
$3\pm$ & $4$ & $\bar{x}_{1,\pm i}, \bar{x}_{2,-1}, \bar{t}_{4,1} (q-1)$\\
$4\pm$ & $6$ & $\bar{x}_{1,\omega^{\pm 1}}, \bar{x}_{2,\omega^{\pm 2}}, \bar{x}_{3,-1}, \bar{t}_{6,1} (q-1)$\\
$5\pm$ & $3$ & $\bar{x}_{1,\omega^{\pm 2}}, \bar{t}_{6,1} (q-1)$\\
\end{tabular}
\end{table}

\subsection{Dilaton equations for permutable inputs}
\

By consecutively applying the dilaton equations (Proposition 4.1 in \cite{Tang2}), Section 4.3 in \cite{Tang2} gives the following results on super-traces:

\begin{enumerate}
\item
\[str_h H^\ast\left(\M_{1,4+2\ell_2}^{-1}, (-L_1-1) \cdot \prod_{i=1}^{\ell_2}(L_{2i+3}L_{2i+4}-1)\right) = \frac14 \cdot 2^{\ell_2} (\ell_2+1)!\]
\item
\[str_h H^\ast\left(\M_{1,4+2\ell_2}^{-1}, (L_5L_6-1)^2 \cdot \prod_{i=2}^{\ell_2}(L_{2i+3}L_{2i+4}-1)\right) = \frac14 \cdot 2^{\ell_2} (\ell_2+1)!\]
\item
\[str_h H^\ast\left(\M_{1,r+r\ell_r}^\zeta, \prod_{i=1}^{\ell_r}(L_{ri+1}\cdots L_{ri+r}-1)\right) = r^{\ell_r} \ell_r!\]
for $(r,\zeta)=(3,\omega^{\pm 2}), (4,i^{\pm}), (6,\omega^{\pm})$.
\end{enumerate}

\subsection{A proposition on contribution from chains of rational components}
\

We define the matrix 
\[A_r = \left[\sum_\gamma G_r^{\alpha\gamma} R_r \<\!\<\phi_\gamma, \bar{t}_1, \phi_\beta\>\!\>_{0,3_1}\right]\]
whose rows and columns are labeled by $\alpha,\beta$.
Identical to Proposition 1.1 in \cite{Tang1}, we have
\[(I-A_r)^{-1} = \left[\frac{\partial \tau_r^\alpha}{\partial t_{r,0}^\beta}\right].\]
In the right-hand side matrix, rows and columns are likewise labeled by $\alpha,\beta$.

Let `$\sim$' stand for curves (as base curves, after forgetting all marked points with input $\tau$'s) of the following form (or with $1$ red marked points, shrunk to the leftmost component; or with no red marked point, and the blue marked point shrunk directly to the node), connected to another node along the node with the arrow.

\begin{tikzpicture}[x=0.75pt,y=0.75pt,yscale=-1,xscale=1]
\draw  [draw opacity=0] (280.17,113.69) .. controls (280.02,113.54) and (279.87,113.38) .. (279.72,113.23) .. controls (255.09,87.24) and (255.79,46.58) .. (281.28,22.41) -- (325.89,69.46) -- cycle ; \draw   (280.17,113.69) .. controls (280.02,113.54) and (279.87,113.38) .. (279.72,113.23) .. controls (255.09,87.24) and (255.79,46.58) .. (281.28,22.41) ;  
\draw  [draw opacity=0] (281.28,22.41) .. controls (305.65,48.41) and (304.87,88.87) .. (279.46,112.95) .. controls (279.41,113) and (279.37,113.04) .. (279.32,113.08) -- (234.86,65.9) -- cycle ; \draw   (281.28,22.41) .. controls (305.65,48.41) and (304.87,88.87) .. (279.46,112.95) .. controls (279.41,113) and (279.37,113.04) .. (279.32,113.08) ;  
\draw  [dash pattern={on 0.84pt off 2.51pt}] (261,68) .. controls (261,64.69) and (269.13,62) .. (279.17,62) .. controls (289.2,62) and (297.33,64.69) .. (297.33,68) .. controls (297.33,71.31) and (289.2,74) .. (279.17,74) .. controls (269.13,74) and (261,71.31) .. (261,68) -- cycle ;
\draw   (300,68) .. controls (300,51.43) and (313.43,38) .. (330,38) .. controls (346.57,38) and (360,51.43) .. (360,68) .. controls (360,84.57) and (346.57,98) .. (330,98) .. controls (313.43,98) and (300,84.57) .. (300,68) -- cycle ;
\draw  [dash pattern={on 0.84pt off 2.51pt}] (300,68.75) .. controls (300,63.09) and (313.28,58.5) .. (329.67,58.5) .. controls (346.05,58.5) and (359.33,63.09) .. (359.33,68.75) .. controls (359.33,74.41) and (346.05,79) .. (329.67,79) .. controls (313.28,79) and (300,74.41) .. (300,68.75) -- cycle ;
\draw   (360,68) .. controls (360,51.43) and (373.43,38) .. (390,38) .. controls (406.57,38) and (420,51.43) .. (420,68) .. controls (420,84.57) and (406.57,98) .. (390,98) .. controls (373.43,98) and (360,84.57) .. (360,68) -- cycle ;
\draw  [dash pattern={on 0.84pt off 2.51pt}] (360,68.75) .. controls (360,63.09) and (373.28,58.5) .. (389.67,58.5) .. controls (406.05,58.5) and (419.33,63.09) .. (419.33,68.75) .. controls (419.33,74.41) and (406.05,79) .. (389.67,79) .. controls (373.28,79) and (360,74.41) .. (360,68.75) -- cycle ;
\draw   (523,70) .. controls (523,53.43) and (536.43,40) .. (553,40) .. controls (569.57,40) and (583,53.43) .. (583,70) .. controls (583,86.57) and (569.57,100) .. (553,100) .. controls (536.43,100) and (523,86.57) .. (523,70) -- cycle ;
\draw  [dash pattern={on 0.84pt off 2.51pt}] (523,70.75) .. controls (523,65.09) and (536.28,60.5) .. (552.67,60.5) .. controls (569.05,60.5) and (582.33,65.09) .. (582.33,70.75) .. controls (582.33,76.41) and (569.05,81) .. (552.67,81) .. controls (536.28,81) and (523,76.41) .. (523,70.75) -- cycle ;
\draw  [color={rgb, 255:red, 208; green, 2; blue, 27 }  ,draw opacity=1 ][line width=3]  (325.33,79) .. controls (325.33,77.8) and (326.3,76.83) .. (327.5,76.83) .. controls (328.7,76.83) and (329.67,77.8) .. (329.67,79) .. controls (329.67,80.2) and (328.7,81.17) .. (327.5,81.17) .. controls (326.3,81.17) and (325.33,80.2) .. (325.33,79) -- cycle ;
\draw  [color={rgb, 255:red, 208; green, 2; blue, 27 }  ,draw opacity=1 ][line width=3]  (387.5,79) .. controls (387.5,77.8) and (388.47,76.83) .. (389.67,76.83) .. controls (390.86,76.83) and (391.83,77.8) .. (391.83,79) .. controls (391.83,80.2) and (390.86,81.17) .. (389.67,81.17) .. controls (388.47,81.17) and (387.5,80.2) .. (387.5,79) -- cycle ;
\draw  [color={rgb, 255:red, 208; green, 2; blue, 27 }  ,draw opacity=1 ][line width=3]  (550.5,83.17) .. controls (550.5,81.97) and (551.47,81) .. (552.67,81) .. controls (553.86,81) and (554.83,81.97) .. (554.83,83.17) .. controls (554.83,84.36) and (553.86,85.33) .. (552.67,85.33) .. controls (551.47,85.33) and (550.5,84.36) .. (550.5,83.17) -- cycle ;
\draw   (583,70) .. controls (583,53.43) and (596.43,40) .. (613,40) .. controls (629.57,40) and (643,53.43) .. (643,70) .. controls (643,86.57) and (629.57,100) .. (613,100) .. controls (596.43,100) and (583,86.57) .. (583,70) -- cycle ;
\draw  [dash pattern={on 0.84pt off 2.51pt}] (583,70.75) .. controls (583,65.09) and (596.28,60.5) .. (612.67,60.5) .. controls (629.05,60.5) and (642.33,65.09) .. (642.33,70.75) .. controls (642.33,76.41) and (629.05,81) .. (612.67,81) .. controls (596.28,81) and (583,76.41) .. (583,70.75) -- cycle ;
\draw  [color={rgb, 255:red, 208; green, 2; blue, 27 }  ,draw opacity=1 ][line width=3]  (610.5,83.17) .. controls (610.5,81.97) and (611.47,81) .. (612.67,81) .. controls (613.86,81) and (614.83,81.97) .. (614.83,83.17) .. controls (614.83,84.36) and (613.86,85.33) .. (612.67,85.33) .. controls (611.47,85.33) and (610.5,84.36) .. (610.5,83.17) -- cycle ;
\draw  [color={rgb, 255:red, 208; green, 2; blue, 27 }  ,draw opacity=1 ][line width=3]  (612.67,60.5) .. controls (612.67,59.3) and (613.64,58.33) .. (614.83,58.33) .. controls (616.03,58.33) and (617,59.3) .. (617,60.5) .. controls (617,61.7) and (616.03,62.67) .. (614.83,62.67) .. controls (613.64,62.67) and (612.67,61.7) .. (612.67,60.5) -- cycle ;
\draw  [color={rgb, 255:red, 74; green, 144; blue, 226 }  ,draw opacity=1 ][line width=3]  (278,115.86) .. controls (278,114.66) and (278.97,113.69) .. (280.17,113.69) .. controls (281.36,113.69) and (282.33,114.66) .. (282.33,115.86) .. controls (282.33,117.05) and (281.36,118.02) .. (280.17,118.02) .. controls (278.97,118.02) and (278,117.05) .. (278,115.86) -- cycle ;
\draw    (281.28,22.41) -- (239.33,22.97) ;
\draw [shift={(237.33,23)}, rotate = 359.23] [color={rgb, 255:red, 0; green, 0; blue, 0 }  ][line width=0.75]    (10.93,-3.29) .. controls (6.95,-1.4) and (3.31,-0.3) .. (0,0) .. controls (3.31,0.3) and (6.95,1.4) .. (10.93,3.29)   ;
\draw  [color={rgb, 255:red, 74; green, 144; blue, 226 }  ,draw opacity=1 ][line width=3]  (65.17,69.17) .. controls (65.17,67.97) and (66.14,67) .. (67.33,67) .. controls (68.53,67) and (69.5,67.97) .. (69.5,69.17) .. controls (69.5,70.36) and (68.53,71.33) .. (67.33,71.33) .. controls (66.14,71.33) and (65.17,70.36) .. (65.17,69.17) -- cycle ;
\draw    (67.33,67) .. controls (107.33,37) and (127.33,97) .. (167.33,67) ;

\draw (458,60.4) node [anchor=north west][inner sep=0.75pt]    {$\cdots $};
\draw (270,125.4) node [anchor=north west][inner sep=0.75pt]    {$\mathbb{Z}_{r}$};
\draw (58,80.4) node [anchor=north west][inner sep=0.75pt]    {$\mathbb{Z}_{r}$};
\draw (196,60.4) node [anchor=north west][inner sep=0.75pt]    {$:=$};
\end{tikzpicture}

Let $\ell_r$ be the total number of red marked points.
We define the contribution of $\sim$ as the weighted (by $1/\ell_r!$) sum over all $\ell_r$ and possible labeling of marked points, of inputs that account for connecting the right-hand side curve (by the splitting axiom), with the order-$r$ symmetry acting as $\zeta$ on the blue $\mathbb{Z}_r$ ramification point.

\begin{prop}
The total contribution of $\sim$ is 
\[\bar{y}_{1,\zeta} = \bar{x}_{1,\zeta} + \sum_{\beta,\gamma} \phi_\beta G_1^{\beta\gamma} \<\!\<\phi_\gamma, \frac{\partial \tau_r}{\partial t_{r,0}^1} -1, \bar{x}_{1,\zeta}\>\!\>_{0,1_1+1_r+1_1}.\]
\end{prop}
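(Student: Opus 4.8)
The plan is to compute the total contribution of $\sim$ by stratifying the chains according to the number $\ell_r$ of red marked points, applying the permutation-equivariant splitting axiom of Section 1.2 at every node, and resumming the resulting geometric series via the identity $(I-A_r)^{-1} = [\partial\tau_r^\alpha/\partial t_{r,0}^\beta]$ recorded above. First I would isolate the two degenerate configurations described before the statement. When there are no red marked points, the blue ramification point is shrunk directly to the attaching node and the only surviving input is $\bar{x}_{1,\zeta}$; this yields the leading term $\bar{x}_{1,\zeta}$. For $\ell_r \geq 1$ the curve is a genuine chain: a distinguished component carrying the totally ramified blue $\mathbb{Z}_r$-point (with cotangent eigenvalue $\zeta$ and input $\bar{x}_{1,\zeta}$) and the attaching node, followed by a chain of rational components carrying the red $\tau$-inputs.

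Next I would cut the chain with the splitting axiom. Each internal node produces a factor $G_r^{\alpha\beta}$ with dual insertions $\phi_\alpha,\phi_\beta$, and each intermediate rational component, carrying one level-$1$ red input $\bar{t}_1$ against the $R_r$-shifted $\tau$-background, contributes the three-point correlator $R_r\<\!\<\phi_\gamma,\bar{t}_1,\phi_\beta\>\!\>_{0,3_1}$. These assemble precisely into powers of the transfer matrix $A_r$, while the weights $1/\ell_r!$ together with the sum over labelings of the red points reconstruct the symmetrized $\tau$-background built into the double brackets. Summing over chain length then gives $\sum_{n\geq 1}A_r^n = (I-A_r)^{-1}-I$, which in the unit direction $1\in K^0(X)$ equals $\partial\tau_r/\partial t_{r,0}^1 - 1$ by the stated identity; note that the subtracted $I$ is exactly the empty-chain term already accounted for by the bare $\bar{x}_{1,\zeta}$.

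Finally I would glue this resummed tail back onto the distinguished component. On the base curve that component is a three-pointed rational curve whose special points are the blue ramification point (a $1$-cycle carrying $\bar{x}_{1,\zeta}$), the node attaching to the rest of the curve (a fixed $1$-cycle, glued by $G_1=R_1G$ and carrying $\phi_\gamma$), and the single node at which the $r$-cycle of resummed chains attaches, now bearing the effective input $\partial\tau_r/\partial t_{r,0}^1 - 1$. Reading off the corresponding correlator gives
\[\sum_{\beta,\gamma}\phi_\beta G_1^{\beta\gamma}\<\!\<\phi_\gamma,\tfrac{\partial\tau_r}{\partial t_{r,0}^1}-1,\bar{x}_{1,\zeta}\>\!\>_{0,1_1+1_r+1_1},\]
which together with the bare term produces the claimed formula for $\bar{y}_{1,\zeta}$.

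The main obstacle is the bookkeeping of cycle types and eigenvalues through the splitting axiom. One must track that the internal gluings of the chain use the $R_r$-shifted metric $G_r$ while the attachment to the remainder of the curve uses $G_1$; that the totally ramified blue point persists as a $1$-cycle with eigenvalue $\zeta$ whereas the chains attach as an $r$-cycle on the base; and that the combinatorial factors $1/\ell_r!$ and the labelings match the geometric-series coefficients exactly, so that the resummation collapses to $\partial\tau_r/\partial t_{r,0}^1$ and not some rescaled variant. Verifying this matching, rather than the formal resummation itself, is where the care is required.
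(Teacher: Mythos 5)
Your overall strategy coincides with the paper's: stratify the configurations by the number $\ell_r$ of red points, cut at the nodes with the splitting axiom so that internal rational components become factors of the transfer matrix $A_r$, and resum the geometric series through $(I-A_r)^{-1}=\left[\partial\tau_r^\alpha/\partial t_{r,0}^\beta\right]$, with the $\ell_r=0$ stratum giving the bare $\bar{x}_{1,\zeta}$ and the attaching node contributing $G_1$. This is essentially the paper's proof read in the opposite direction.

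The gap is at the far end of the chain, precisely the ``matching'' you defer to bookkeeping. You assign to a chain with $\ell_r$ red points the contribution $A_r^{\ell_r}$ contracted against the unit direction $1\in K^0(X)$, but you never exhibit the geometric configuration responsible for that unit contraction, and it is not the one occurring in the definition of $\sim$. By that definition (the figure and its parenthetical), for $\ell_r=1$ the red point lies on the leftmost component, so the middle ($1_r$) slot receives $\bar{t}_{r,1}$ itself; and for $\ell_r\geq2$ the last component of the chain carries \emph{two} red points (a component with one node and a single red point is not stable once the $\tau$-points are forgotten), so the chain ends with the vector $\sum_{\delta,\epsilon}\phi_\delta G_r^{\delta\epsilon}R_r\<\!\<\phi_\epsilon,\bar{t}_{r,1},\bar{t}_{r,1}\>\!\>_{0,3_1}$ rather than with $A_r$ applied to $1$. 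Identifying these end contributions with the unit-column powers $A_r^{\ell_r}\cdot 1$ is the actual content of the proof beyond the formal resummation, and it is what the paper's second displayed identity,
\[R_r\<\!\<\phi_\alpha,\bar{t}_{r,1},\bar{t}_{r,1}\>\!\>_{0,3_1}=\sum_{\beta,\gamma}R_r\<\!\<\phi_\alpha,\bar{t}_{r,1},\phi_\beta\>\!\>_{0,3_1}G_r^{\beta\gamma}R_r\<\!\<\phi_\gamma,\bar{t}_{r,1},1\>\!\>_{0,3_r},\]
accomplishes; for the $\ell_r=1$ term one additionally needs the string equation together with the degree-zero three-point pairing to see that $\sum_{\delta,\epsilon}\phi_\delta G_r^{\delta\epsilon}R_r\<\!\<\phi_\epsilon,\bar{t}_{r,1},1\>\!\>_{0,3_1}=\bar{t}_{r,1}$. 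Your plan neither states nor substitutes for these identities, so the resummation $\sum_{n\geq1}A_r^n\cdot 1=\partial\tau_r/\partial t_{r,0}^1-1$ remains a formal identity not yet tied to the strata being summed; supplying them would complete your argument along essentially the paper's lines.
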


\begin{proof}
Using
\[\sum_{\ell_r \geq 0} A_r^{\ell_r} = \left[\frac{\partial \tau^\alpha_r}{\partial t^\beta_{r,0}}\right]\]
and
\[R_r\<\!\< \phi_\alpha, \bar{t}_{r,1},\bar{t}_{r,1}\>\!\>_{0,3_1} = \sum_{\beta,\gamma} R_r \<\!\< \phi_\alpha, \bar{t}_{r,1},\phi_\beta\>\!\>_{0,3_1} G_r^{\beta\gamma} R_r \<\!\< \phi_\gamma, \bar{t}_{r,1},1\>\!\>_{0,3_r}\]
for $\ell_r \geq 2$, we see that $\bar{y}_{1,\zeta}$ expands as
\[\begin{array}{ll}
&\bar{x}_{1,\zeta} +  \dsum_{\beta,\gamma} \phi_\beta G_1^{\beta\gamma} \<\!\<\phi_\gamma, \bar{t}_{r,1}, \bar{x}_{1,\zeta}\>\!\>_{0,1_1+1_r+1_1}\\
+& \dsum_{\ell_r \geq 2}\left(\dsum_{\alpha,\beta} \phi_\alpha G_1^{\alpha\beta} \<\!\<\phi_\beta, \dsum_{\gamma,\delta,\epsilon}\phi_\gamma[A_r^{\ell_r-2}]_\delta^\gamma G_r^{\delta\epsilon} R_r\<\!\< \phi_\epsilon, \bar{t}_{r,1},\bar{t}_{r,1}\>\!\>_{0,3_1}, \bar{x}_{1,\zeta}\>\!\>_{0,1_1+1_r+1_1}\right)
\end{array}\]

The first two terms correspond to $\ell_r = 0,1$, while the summation precisely covers $\ell_r \geq 2$, by the splitting axiom.
\end{proof}

\subsection{An algorithm to compute $\tau$}
\

In \cite{Tang2} Section 3.3, we proved the following algorithm for $\tau$.
We define a map $T: K^\infty \to K^\infty$, by sending $^1\tau = (^1\tau_1,\cdots,^1\tau_r, \cdots)$ to $^2\tau = (^2\tau_1,\cdots,^2\tau_r, \cdots)$, where
\[^2\tau_r = \t_r(1) + R_r\left[\dsum_{\alpha,\beta} \<\!\<LD\t_r(L), \phi_\alpha\>\!\>_{0,2_1}G_r^{\alpha\beta}\phi_\beta\right].\]
In these double-bracket correlators, we take the input $\tau$ to be $^1\tau$, and 
\[D\t(L)=\frac{\t(1)-\t(L)}{1-L}.\]
Then $T$ is a contraction map: if $^1\tau - ^2\tau \in \Lambda_+^n$, then $T(^1\tau)-T(^2\tau) \in \Lambda_+^{n+1}$.
Here $\Lambda$ is a local algebra with maximal ideal $\Lambda_+$ containing $\t_r,\tau_r$ and Novikov variables.
This procedure recursively computes $\tau$ modulo any power of $\Lambda_+$ and shows that the sequence $\tau^{(n)} = T^n(0)$ converges to some $\tau \in K^\infty$ for which $\bar{\t}(1)=0$.

\section{Computations}

\subsection{Separating base curve and map contributions}
\

We present a formula that separates various super-traces into the product of $(a)$ the contributions of the base curves and $(b)$ the contribution of the map from a $fixed$ curve to $X$.

\begin{lemma}
Let $\pi: \mathcal{N} \to \mathcal{M}$ be an $h$-equivariant map between virtual $h$-orbifolds, and let $\tilde{h}$ be a lift of $h$.
Let $W \to \mathcal{M}, V \to \mathcal{N}$ be virtual $h$-bundles such that:
\begin{enumerate}
\item
$str_{\tilde{h}}^{fake} H^\ast (\pi^{-1}(\{C\}), V)$ is independent of the base point $\{C\} \in \mathcal{M}^h$.
\item
$ch (str_{\tilde{h}} W) \in H^{\dim_\mathbb{R} \mathcal{M}^h}(\mathcal{M}^h)$.
\end{enumerate}
Then
\[str^{fake}_{\tilde{h}} H^\ast (\mathcal{N}, \pi^\ast W \cdot V) = \frac{str^{fake}_{\tilde{h}} H^\ast (\mathcal{M}, W)}{str_{\tilde{h}}c_0(\wedge^\ast (N_{\mathcal{M}^h/\mathcal{M}}|_{\{C\}})^\ast)} \cdot str^{fake}_{\tilde{h}} H^\ast (\pi^{-1}(\{C\}), V)\]
for any $C \in \mathcal{M}^h$.
\end{lemma}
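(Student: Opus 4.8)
The plan is to reduce both sides to Kawasaki--Riemann--Roch integrals over the relevant $\tilde h$-fixed loci and then to separate the integral over the base from the integral along a single fiber, using only the $h$-equivariance of $\pi$ together with the two hypotheses. The first observation is that, since $\pi$ is $h$-equivariant, it restricts to a map $\pi^h\colon \mathcal{N}^h \to \mathcal{M}^h$ whose fiber over a point $\{C\}\in\mathcal{M}^h$ is exactly $(\pi^{-1}(\{C\}))^h$; this is the geometric input that lets one fiber stand in for the fiber direction everywhere. Unwinding the definition of the fake super-trace, the left-hand side is
\[str^{fake}_{\tilde h} H^\ast(\mathcal{N}, \pi^\ast W\cdot V) = \int_{[\mathcal{N}^h]} ch\!\left(str_{\tilde h}\frac{\pi^\ast W\cdot V}{\wedge^\ast \tilde N^\ast_{\mathcal{N}^h/\mathcal{N}}}\right) td(\mathcal{N}^h),\]
and the whole argument consists in factoring this integrand.

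The key structural step is to split the virtual normal bundle. Restricting the relative tangent sequence $0\to T_\pi \to T\mathcal{N}\to \pi^\ast T\mathcal{M}\to 0$ to $\mathcal{N}^h$ and separating $\tilde h$-fixed from moving parts yields the virtual decomposition
\[\tilde N_{\mathcal{N}^h/\mathcal{N}} = \tilde N_{\mathrm{fib}} \oplus \pi^{h\ast}\tilde N_{\mathcal{M}^h/\mathcal{M}},\]
where $\tilde N_{\mathrm{fib}}$ restricts on each fiber to the normal bundle $\tilde N_{(\pi^{-1}\{C\})^h/\pi^{-1}\{C\}}$, together with the splitting $T\mathcal{N}^h = T_{\pi^h}\oplus\pi^{h\ast}T\mathcal{M}^h$ of the fixed-locus tangent bundle. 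Since $str_{\tilde h}$ is multiplicative on tensor products, $ch$ is a ring homomorphism, and $td$ is multiplicative, the integrand then factors as a class pulled back from $\mathcal{M}^h$, namely $\pi^{h\ast}\!\left[ch(str_{\tilde h}(W/\wedge^\ast\tilde N^\ast_{\mathcal{M}^h/\mathcal{M}}))\,td(\mathcal{M}^h)\right]$, times a class $\beta$ on $\mathcal{N}^h$ whose restriction to each fiber integrates to $str^{fake}_{\tilde h}H^\ast(\pi^{-1}(\{C\}),V)$.

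I would then push forward along $\pi^h$ by the projection formula, turning the integral over $\mathcal{N}^h$ into $\int_{[\mathcal{M}^h]} ch(str_{\tilde h}(W/\wedge^\ast\tilde N^\ast_{\mathcal{M}^h/\mathcal{M}}))\,td(\mathcal{M}^h)\cdot\pi^h_\ast\beta$, where the two hypotheses do the remaining work. Hypothesis (1) guarantees that the fiberwise integral of $\beta$ is the \emph{constant} $str^{fake}_{\tilde h}H^\ast(\pi^{-1}(\{C\}),V)$, so it is the degree-zero component of $\pi^h_\ast\beta$; hypothesis (2) places $ch(str_{\tilde h}W)$ in top degree $H^{\dim_\mathbb{R}\mathcal{M}^h}(\mathcal{M}^h)$, which forces every remaining factor to contribute only through its degree-zero component. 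Extracting those components is exactly what produces the point-wise normalization: the degree-zero part of $ch(str_{\tilde h}(1/\wedge^\ast\tilde N^\ast_{\mathcal{M}^h/\mathcal{M}}))$ is $1/str_{\tilde h}c_0(\wedge^\ast(N_{\mathcal{M}^h/\mathcal{M}}|_{\{C\}})^\ast)$ and that of $td(\mathcal{M}^h)$ is $1$. Assembling the three surviving scalars, recognizing the leftover base integral as $str^{fake}_{\tilde h}H^\ast(\mathcal{M},W)$ and the normal-bundle weight as the denominator, gives the claimed product.

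I expect the main obstacle to be making the normal-bundle splitting and the fiber integration rigorous in the virtual-orbifold / Kawasaki setting rather than for an honest submersion: one must verify that the relative tangent complex of $\pi$ is compatible with passage to the $I^h$-fixed loci, that $\pi^h$ is proper with virtual fiber $(\pi^{-1}\{C\})^h$ so that the projection formula and Fubini apply to the virtual classes, and that hypothesis (1) is genuinely a statement about these virtual fibers. The second delicate point is purely bookkeeping: one must apply hypothesis (2) to collapse to degree-zero components in the correct order, so that the normalization factor $str_{\tilde h}c_0(\wedge^\ast(N_{\mathcal{M}^h/\mathcal{M}}|_{\{C\}})^\ast)$ is attached to the base normal bundle and not absorbed into $str^{fake}_{\tilde h}H^\ast(\mathcal{M},W)$.
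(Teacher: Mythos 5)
Your proposal is correct and follows essentially the same route as the paper's proof: the same splitting $N_{\mathcal{N}^h/\mathcal{N}} = N_{\mathrm{fiber}^h/\mathrm{fiber}} \oplus \pi^\ast N_{\mathcal{M}^h/\mathcal{M}}$, fiber integration made constant by hypothesis (1), and the dimension argument from hypothesis (2) that extracts the degree-zero normalization $1/str_{\tilde{h}}c_0(\wedge^\ast (N_{\mathcal{M}^h/\mathcal{M}}|_{\{C\}})^\ast)$. The only cosmetic difference is that the paper phrases the integrals over the quotients $\mathcal{N}^h/H$, $\mathcal{M}^h/H$ with the representation factor $E_h$ inserted, while you integrate over the fixed loci directly and push forward along $\pi^h$; these are equivalent presentations of the fake super-trace.
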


\begin{proof}
Recall that $E_h = \sum_\lambda \lambda \mathbb{C}_{\lambda^{-1}}$ satisfies $E^2_h = |H| E_h$, where $H$ is the group generated by $h$.
Observe that taking the quotient by the finite group $H$ does not affect normal bundles, and
\[N_{\mathcal{N}^h/\mathcal{N}} = N_{\text{fiber}^h/\text{fiber}} \oplus \pi^\ast N_{\mathcal{M}^h/\mathcal{M}}.\]
Hence, the left-hand side is
\[\begin{array}{ll}
&\dint_{\mathcal{N}^h/H} ch \ str_{\tilde{h}} \left(\dfrac{V \cdot \pi^\ast W \cdot E_h}{\wedge^\ast N_{\mathcal{N}^h/\mathcal{N}}^\ast}/H\right) td (\mathcal{N}^h/H)\\
=& \left(\dint_{\mathcal{M}^h/H} ch \ str_{\tilde{h}} \left(\dfrac{W \cdot E_h}{\wedge^\ast N_{\mathcal{M}^h/\mathcal{M}}^\ast}/H\right) td (\mathcal{M}^h/H)\right) \cdot str_{\tilde{h}}^{fake} H^\ast (\pi^{-1}(\{C\}), V)
\end{array}\]
by assumption (1) and any $C \in \mathcal{M}^h$.
By assumption (2) together with a dimension argument, only the degree-zero term $\frac1{str_{\tilde{h}}c_0(\wedge^\ast (N_{\mathcal{M}^h/\mathcal{M}}|_{\{C\}})^\ast)}$ of 
\[ch \ str_{\tilde{h}} \left(\dfrac{E_h}{\wedge^\ast N_{\mathcal{M}^h/\mathcal{M}}^\ast}/H\right) td (\mathcal{M}^h/H)\]
matters.
This completes the proof.
\end{proof}

We have the following direct corollary.
\begin{cor}
Further let $\mathcal{M}'$ be such that $\mathcal{M} \supseteq \mathcal{M}' \supseteq \mathcal{M}^h$.
If $str_{\tilde{h}} V$ is independent of different lifts $\tilde{h}$ of $h$ (i.e., it depends only on $h$), then
\[str^{fake}_{\tilde{h}} H^\ast (\mathcal{N}, \pi^\ast W \cdot V) = \frac1{str_{\tilde{h}}c_0(\wedge^\ast N_{\mathcal{M}'/\mathcal{M}}^\ast)} \cdot str^{fake}_{\tilde{h}} H^\ast (\mathcal{M}', W) str_{\tilde{h}} H^\ast (\pi^{-1}(\{C\}), V).\]
\end{cor}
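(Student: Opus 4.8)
The plan is to deduce the Corollary directly from the Lemma by performing two substitutions on the Lemma's right-hand side: replacing the deepest fixed locus $\mathcal{M}^h$ by the intermediate space $\mathcal{M}'$ in the base factor, and upgrading the fiberwise fake super-trace $str^{fake}_{\tilde{h}}$ to the honest super-trace $str_{\tilde{h}}$. The first thing to record is that, because $\mathcal{M}'$ is $h$-invariant with $\mathcal{M}^h \subseteq \mathcal{M}' \subseteq \mathcal{M}$, its $h$-fixed locus coincides with $\mathcal{M}^h$. Hence $str^{fake}_{\tilde{h}} H^\ast(\mathcal{M}, W)$ and $str^{fake}_{\tilde{h}} H^\ast(\mathcal{M}', W)$ are both Kawasaki--Riemann--Roch integrals over the same locus $\mathcal{M}^h$, differing only through the normal bundle entering the denominator $\wedge^\ast N^\ast$.

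For the base factor I would exploit the filtration $\mathcal{M}^h \subseteq \mathcal{M}' \subseteq \mathcal{M}$, which gives the short exact sequence $0 \to N_{\mathcal{M}^h/\mathcal{M}'} \to N_{\mathcal{M}^h/\mathcal{M}} \to N_{\mathcal{M}'/\mathcal{M}}|_{\mathcal{M}^h} \to 0$ and hence the multiplicativity
\[str_{\tilde{h}} c_0(\wedge^\ast N^\ast_{\mathcal{M}^h/\mathcal{M}}) = str_{\tilde{h}} c_0(\wedge^\ast N^\ast_{\mathcal{M}^h/\mathcal{M}'}) \cdot str_{\tilde{h}} c_0(\wedge^\ast N^\ast_{\mathcal{M}'/\mathcal{M}}).\]
Combining this with hypothesis (2) of the Lemma and the same dimension argument used there --- namely that $ch(str_{\tilde{h}} W)$ is top-degree on $\mathcal{M}^h$, so only the degree-zero part of every remaining factor survives --- yields $str^{fake}_{\tilde{h}} H^\ast(\mathcal{M}, W) = str^{fake}_{\tilde{h}} H^\ast(\mathcal{M}', W) / str_{\tilde{h}} c_0(\wedge^\ast N^\ast_{\mathcal{M}'/\mathcal{M}})$. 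Substituting this into the Lemma's base factor rewrites $str^{fake}_{\tilde{h}} H^\ast(\mathcal{M}, W)/str_{\tilde{h}} c_0(\wedge^\ast N^\ast_{\mathcal{M}^h/\mathcal{M}})$ as the desired $\mathcal{M}'$-expression, up to a residual factor $1/str_{\tilde{h}} c_0(\wedge^\ast N^\ast_{\mathcal{M}^h/\mathcal{M}})$ that must be absorbed by the fiber term.

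For the fiber factor I would pass from the single fake super-trace $str^{fake}_{\tilde{h}} H^\ast(\pi^{-1}(\{C\}), V)$ to the honest super-trace $str_{\tilde{h}} H^\ast(\pi^{-1}(\{C\}), V)$ using the two descriptions of the super-trace recalled in Section 1.3: the honest super-trace over the fiber is the sum, over all lifts of $h$ to the fiber, of the corresponding fake super-traces. The hypothesis that $str_{\tilde{h}} V$ depends only on $h$ and not on the chosen lift guarantees that the bundle contribution of each lift is identical, so these terms reassemble into the genuine holomorphic Euler characteristic of the fiber; the multiplicity produced by this sum over lifts is precisely the residual factor $1/str_{\tilde{h}} c_0(\wedge^\ast N^\ast_{\mathcal{M}^h/\mathcal{M}})$ left over from the base computation. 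Multiplying the two factors then gives the stated formula.

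The step I expect to be the main obstacle is the orbifold bookkeeping in this last conversion: one must check that the sum over lifts on the fiber, weighted by the representation $E_h$ and the $1/|H|$ normalization built into the super-trace, reproduces the residual factor $str_{\tilde{h}} c_0(\wedge^\ast N^\ast_{\mathcal{M}^h/\mathcal{M}})$ \emph{exactly}, with no spurious multiplicity and with the eigenvalue weights matching those of the node-smoothing directions of $N_{\mathcal{M}^h/\mathcal{M}}$. Verifying that the lift-independence hypothesis is exactly strong enough to make this cancellation precise, rather than merely up to a scalar, is the delicate point; the remainder is the dimension argument and the multiplicativity of $str_{\tilde{h}} c_0(\wedge^\ast N^\ast)$ already in place from the Lemma.
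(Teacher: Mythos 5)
The paper offers no written proof here---the statement is presented as a ``direct corollary'' of Lemma 2.1---so the question is whether your derivation actually closes the gap. Your Step 1 is fine: since $(\mathcal{M}')^h=\mathcal{M}^h$, both base fake super-traces are integrals over $\mathcal{M}^h$, and hypothesis (2) plus the multiplicativity of $str_{\tilde{h}}c_0(\wedge^\ast N^\ast)$ under $N_{\mathcal{M}^h/\mathcal{M}}=N_{\mathcal{M}^h/\mathcal{M}'}\oplus N_{\mathcal{M}'/\mathcal{M}}|_{\mathcal{M}^h}$ gives
\[
str^{fake}_{\tilde{h}} H^\ast(\mathcal{M},W)=\frac{str^{fake}_{\tilde{h}} H^\ast(\mathcal{M}',W)}{str_{\tilde{h}}c_0(\wedge^\ast N^\ast_{\mathcal{M}'/\mathcal{M}})}.
\]
But after substituting this into the Lemma, what remains to be proved is exactly
\[
str_{\tilde{h}} H^\ast\bigl(\pi^{-1}(\{C\}),V\bigr)=\frac{str^{fake}_{\tilde{h}} H^\ast\bigl(\pi^{-1}(\{C\}),V\bigr)}{str_{\tilde{h}}c_0\bigl(\wedge^\ast (N_{\mathcal{M}^h/\mathcal{M}}|_{\{C\}})^\ast\bigr)},
\]
and this---the only place where the new hypothesis on lift-independence of $str_{\tilde{h}}V$ can enter---is precisely the step you defer rather than prove. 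As it stands, your proposal reduces the Corollary to its real content and then stops.

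Moreover, the mechanism you sketch for this step cannot work as stated. First, lift-independence of $str_{\tilde{h}}V$ is a statement about the bundle alone; the fake super-traces attached to different lifts $\tilde{h}'$ of $h$ on the fiber are integrals over \emph{different} fixed loci $(\pi^{-1}(\{C\}))^{\tilde{h}'}$ with \emph{different} normal-bundle denominators, so these terms are not equalized by the hypothesis, and their sum is not ``a multiplicity times one term.'' Second, the factor $1/str_{\tilde{h}}c_0(\wedge^\ast N^\ast_{\mathcal{M}^h/\mathcal{M}})=\prod_j(1-\lambda_j^{-1})^{-1}$ is manufactured from the eigenvalues $\lambda_j$ of $\tilde{h}$ on the \emph{base} normal directions at $C$ (generically a non-integral rational number, e.g.\ $1/2$ per $(-1)$-eigendirection); a purely fiberwise count of lifts, however weighted by $E_h$ and $1/|H|$, contains no reference to this base data, so it cannot reproduce that factor ``exactly'' in general. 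The identity genuinely needs the input from the Lemma's proof that $N_{\mathcal{N}^h/\mathcal{N}}=N_{\mathrm{fiber}^h/\mathrm{fiber}}\oplus\pi^\ast N_{\mathcal{M}^h/\mathcal{M}}$, i.e.\ one must compare the Kawasaki strata of $I^h\mathcal{N}$ lying over a fixed base stratum---where the base normal directions do appear in each fiber-stratum's denominator---rather than work intrinsically on the fiber orbifold as you propose. Until that bookkeeping is carried out, the central step of your argument is a conjecture, not a proof.
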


\subsection{Non-permutative part: Case 1}
\

This part is identical to \cite{Tang1} Theorem $1$.
The total contribution is 
\[\frac1{24} \log \det\left(\frac{\partial \tau_1}{\partial t_{1,0}}\right).\]

\subsection{Case $2a$}
\

In Case $2a$, by Corollary 2.1 and Section 1.5, we have
\[\begin{array}{ll}
&\dfrac1{4!\ell_2!} \dsum_{\bar{\ell},d} \frac{Q^d}{\bar{\ell}!r^\ell} \cdot 4 \<\bar{x}'_{1,-1}(-\bar{L}-1), \bar{x}_{1,-1},\cdots; \bar{t}_{2,1} (\bar{L}-1), \cdots ; \tau_2, \cdots \>_{1,4_1+(\ell_2)_2+(\bar{\ell}_2)_2,d} \\
=&\dfrac1{c_0(\wedge^\ast N^\ast_{\M_{1,4+2\ell_2}^{-1}/\M_{1,4+2\ell_2}})} str_{h_2} H^\ast(\M^{-1}_{1,4+2\ell_2},W_{2a,\ell_2}) \cdot str_{h_2} H^\ast (\pi_2^{-1}(\{C_2\}), V_{2a,\ell_2}) \\
=&\dfrac16 \cdot \dfrac{(\ell_2+1)}{4 \cdot 8} \cdot str_{h_2} H^\ast (\pi_2^{-1}(\{C_2\}), V_{2a,\ell_2}),\end{array}\]
where
\[\begin{array}{ll}
W_{2a, \ell_2} & = \dfrac1{6 \cdot \ell_2! 2^{\ell_2}} (-L_1-1) \dprod_{i=1}^{\ell_2} (L_{2i+3}L_{2i+4} -1);\\
V_{2a,\ell_2} &= \dsum_{\bar{\ell},d} \frac{Q^d}{\prod_r \bar{\ell}_r!r^{\bar{\ell}_r}} \left(ev_1^\ast \bar{x}'_{1,-1} \prod_{i=2}^4 ev_i^\ast \bar{x}_{1,-1} \cdot \dprod_{i=1}^{\ell_2} (ev_{2i+3}^\ast \bar{t}_{2,1} ev_{2i+4}^\ast \bar{t}_{2,1}) \cdot (\bar{ft}_2)_\ast \mathcal{T}\right),
\end{array}\]
and $\mathcal{T}$ is the virtual bundle on $\M_{1,|\ell+\bar{\ell}|}(X,d)$ contributed from input $\tau$'s.

\begin{lemma}
\[\begin{array}{ll}
&\dfrac1{4 \cdot 8} \dsum_{\ell_2} (\ell_2+1)str_{h_2} H^\ast (\pi_2^{-1}(\{C_2\}), V_{2a,\ell_2}) \\
=& \<\!\<\bar{x}^{2a}_{1,-1}(-\bar{L}-1), \bar{y}_{1,-1}, \bar{y}_{1,-1}, \bar{y}_{1,-1}\>\!\>_{1,4_1},
\end{array}\]
where
\[\bar{x}^{2a}_{1,-1} = \bar{x}'_{1,-1} - \sum_{\alpha,\beta} \phi_\alpha G_1^{\alpha\beta}\<\!\<\phi_\beta,\bar{t}_{2,1},\bar{x}'_{1,-1}\>\!\>_{0,1_1+1_2+1_1}.\]
\end{lemma}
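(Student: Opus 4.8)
The plan is to mirror the chain-resummation argument used in the proof of Proposition 1.1, now carried out on the four ramification points of the fixed base curve $C_2$ and in genus $1$. Recall that $\pi_2^{-1}(\{C_2\})$ is the moduli of maps to $X$ from the fixed balanced elliptic curve $C_2$, equipped with its order-$2$ automorphism $h_2$, with four $h_2$-fixed points carrying the level-$1$ inputs and $\ell_2$ pairs of points swapped by $h_2$ and carrying $\bar t_{2,1}$, all dressed by the pushed-forward bundle $(\bar{ft}_2)_\ast\mathcal{T}$. First I would apply the permutation-equivariant splitting axiom of Section 1.2 to $str_{h_2}H^\ast(\pi_2^{-1}(\{C_2\}),V_{2a,\ell_2})$, cutting each of the four special points away from the genus-$1$ core along the nodes where rational tails (built from the $\bar t_{2,1}$-pairs and the $\tau$-insertions) are attached. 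The $h_2$-action forces these tails to be the order-$2$ balanced configurations governed by Proposition 1.1, so the combinatorics of attaching chains is controlled by the matrix $A_2$ with $\sum_{k\ge 0}A_2^{k}=(I-A_2)^{-1}=[\partial\tau_2^\alpha/\partial t_{2,0}^\beta]$.

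For the three marked points carrying the leading coefficient $\bar x_{1,-1}$, the resummation is a direct application of Proposition 1.1 with $r=2$ and $\zeta=-1$: summing chains of every length converts each bare input into the dressed input
\[\bar y_{1,-1}=\bar x_{1,-1}+\sum_{\beta,\gamma}\phi_\beta G_1^{\beta\gamma}\<\!\<\phi_\gamma,\tfrac{\partial\tau_2}{\partial t_{2,0}^1}-1,\bar x_{1,-1}\>\!\>_{0,1_1+1_2+1_1}.\]
Thus three of the four inputs of the target correlator appear at once, and the only remaining work is to handle the distinguished point $1$ and to match normalizations.

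The subtle point, and what I expect to be the main obstacle, is the first marked point. It carries the first-order coefficient $\bar x'_{1,-1}$ together with the descendant factor $(-\bar L-1)$, and the whole sum is weighted by $(\ell_2+1)$, a factor originating from the base-curve dilaton evaluation of Section 1.5(1), namely $str_{h_2}H^\ast(\M^{-1}_{1,4+2\ell_2},(-L_1-1)\prod_i(L_{2i+3}L_{2i+4}-1))=\tfrac14\cdot 2^{\ell_2}(\ell_2+1)!$. Unlike the plain geometric series that dresses points $2,3,4$, this weighted sum, together with the descendant $(-\bar L-1)$ transported across the splitting node and the dilaton/string relations of Section 1.1, must be shown to truncate the dressing at point $1$ after a single correction, producing
\[\bar x^{2a}_{1,-1}=\bar x'_{1,-1}-\sum_{\alpha,\beta}\phi_\alpha G_1^{\alpha\beta}\<\!\<\phi_\beta,\bar t_{2,1},\bar x'_{1,-1}\>\!\>_{0,1_1+1_2+1_1}\]
rather than a full $\bar y$-type series. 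Pinning down how the $(\ell_2+1)$ weight redistributes — isolating the one pair that contributes the surviving correction, showing the remaining pairs either feed the three $\bar y$-dressings or cancel, and fixing the overall minus sign against the descendant — is the delicate bookkeeping at the heart of the lemma.

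Finally I would reassemble: once the four points are dressed, the genus-$1$ core with its $h_2$-fixed structure and residual $\tau$-background is exactly the permutation-equivariant genus-$1$ four-point double bracket, so the $\ell_2$-summed contribution equals
\[\<\!\<\bar x^{2a}_{1,-1}(-\bar L-1),\bar y_{1,-1},\bar y_{1,-1},\bar y_{1,-1}\>\!\>_{1,4_1},\]
with the prefactor $\tfrac1{4\cdot 8}$ matching the $\ell_2$-independent normalization of $V_{2a,\ell_2}$ and of the splitting. As a safeguard on the numerical factors and signs, I would verify the identity in the $d=0$, $X=\mathrm{pt}$ specialization, where both sides reduce to the fake-super-trace intersection numbers recorded in Section 1.5.
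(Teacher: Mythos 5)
Your proposal correctly identifies the framework (splitting axiom plus chain resummation via Proposition 1.1) and correctly senses that the difficulty sits at the first marked point, but it does not prove the lemma: the step you defer as ``the delicate bookkeeping at the heart of the lemma'' --- showing that the $(\ell_2+1)$-weighted sum produces exactly one correction term, with a minus sign, at point $1$ --- \emph{is} the content of the lemma, and you leave it as an expectation rather than an argument. Worse, your intermediate claim is structurally inconsistent with the weight you start from. Distributing $\ell_2$ labelled pairs into ordered chains attached to $k$ distinguishable points gives $(\ell_2+1)!$ configurations for $k=2$, but a strictly larger count for $k=3$ (weight $(\ell_2+1)(\ell_2+2)/2$ rather than $(\ell_2+1)$ after dividing by $\ell_2!$). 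Since the left-hand side carries weight exactly $(\ell_2+1)=(\ell_2+1)!/\ell_2!$, its direct geometric interpretation dresses only \emph{two} of the three $\bar{x}_{1,-1}$-points. This is the paper's Step 1, whose output is
\[\<\!\<\bar{x}'_{1,-1}(-\bar{L}-1), \bar{y}_{1,-1}, \bar{y}_{1,-1}, \bar{x}_{1,-1}\>\!\>_{1,4_1},\]
with the last insertion still bare --- not the three-fold dressed correlator you assert ``appears at once.''

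The missing idea is the paper's Step 2: the minus sign in $\bar{x}^{2a}_{1,-1}$ is an \emph{overcounting correction} incurred when the bare fourth insertion $\bar{x}_{1,-1}$ is upgraded to $\bar{y}_{1,-1}$; it is not a ``truncated dressing'' of point $1$. The paper encodes this in the counting identity $(\ell_2+1)! = (\ell_2+2)! - (\ell_2+1)\cdot(\ell_2+1)!$: the two-chain configurations equal the three-chain configurations minus those configurations in which a $\bar{t}_{2,1}$-bridge attaches at the descendant point, and by the splitting axiom this subtracted family contributes $\sum_{\alpha,\beta}\phi_\alpha G_1^{\alpha\beta}\<\!\<\phi_\beta,\bar{t}_{2,1},\bar{x}'_{1,-1}\>\!\>_{0,1_1+1_2+1_1}$ glued at the first point, which is precisely what converts $\bar{x}'_{1,-1}$ into $\bar{x}^{2a}_{1,-1}$. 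Note that no chain resummation can produce this term by itself: chains enter with positive weight, so the negative sign can only arise from comparing two counts, a comparison your plan never sets up. Your proposed $X=\mathrm{pt}$, $d=0$ specialization is a sanity check on constants, not a substitute for this step.
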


\begin{proof}
\

{\bf Step 1: Coefficients $\ell_2+1$.}
We compute via curves of the following type.
Here, the complex structure of $\square = T^2/\mathbb{Z}_2$ is fixed as $\Gamma$, as shown in the picture below.
This figure represents the total contribution (from the relevant moduli space) of all possible maps arising from curves with two $\sim$-chains.
We sum over every possible value of $\ell_2$ (the number of level-$2$ marked points) and all configurations of marked points, weighting each term by $\frac{1}{\ell_2!}$.

\begin{tikzpicture}[x=0.75pt,y=0.75pt,yscale=-1,xscale=1]
\draw   (23.74,70) -- (55.71,23.18) -- (102.61,55.21) -- (70.64,102.03) -- cycle ;
\draw  [color={rgb, 255:red, 74; green, 144; blue, 226 }  ,draw opacity=1 ][line width=3]  (54.58,24.83) .. controls (53.67,24.21) and (53.43,22.96) .. (54.06,22.05) .. controls (54.68,21.14) and (55.93,20.91) .. (56.84,21.53) .. controls (57.75,22.16) and (57.98,23.4) .. (57.36,24.31) .. controls (56.73,25.22) and (55.49,25.46) .. (54.58,24.83) -- cycle ;
\draw  [color={rgb, 255:red, 74; green, 144; blue, 226 }  ,draw opacity=1 ][line width=3]  (101.48,56.86) .. controls (100.57,56.24) and (100.33,54.99) .. (100.96,54.08) .. controls (101.58,53.17) and (102.83,52.94) .. (103.74,53.56) .. controls (104.65,54.19) and (104.88,55.43) .. (104.26,56.34) .. controls (103.63,57.25) and (102.39,57.49) .. (101.48,56.86) -- cycle ;
\draw  [color={rgb, 255:red, 126; green, 211; blue, 33 }  ,draw opacity=1 ][line width=3]  (22.6,71.65) .. controls (21.69,71.02) and (21.46,69.78) .. (22.09,68.87) .. controls (22.71,67.96) and (23.95,67.72) .. (24.87,68.35) .. controls (25.78,68.97) and (26.01,70.22) .. (25.39,71.13) .. controls (24.76,72.04) and (23.52,72.27) .. (22.6,71.65) -- cycle ;
\draw  [color={rgb, 255:red, 74; green, 144; blue, 226 }  ,draw opacity=1 ][line width=3]  (69.51,103.68) .. controls (68.59,103.05) and (68.36,101.81) .. (68.99,100.9) .. controls (69.61,99.99) and (70.86,99.75) .. (71.77,100.38) .. controls (72.68,101) and (72.91,102.25) .. (72.29,103.16) .. controls (71.66,104.07) and (70.42,104.3) .. (69.51,103.68) -- cycle ;
\draw    (56.84,21.53) .. controls (96.84,-8.47) and (116.84,51.53) .. (156.84,21.53) ;
\draw    (102.61,55.21) .. controls (142.61,25.21) and (162.61,85.21) .. (202.61,55.21) ;
\draw (55,55) node [anchor=north west][inner sep=0.75pt]    {$\Gamma $};
\end{tikzpicture}

All marked point configurations contribute equally for each fixed $\ell_2$.
The total number of configurations of marked points is
\[\sum_{\ell'_2} C_{\ell_2}^{\ell'_2} \ell'_2!(\ell_2-\ell'_2)!=(\ell_2+1)!,\]
which cancels the weight $1/\ell_2!$ for curves with $\ell_2$ level-2 marked points, leaving a coefficient $(\ell_2+1)$.
Hence, the contribution is the left-hand side of the lemma.

Alternatively, summing over all $\ell_2$'s, we can interpret the contribution as that of a $g=1$ curve with $\mathbb{Z}_2$ symmetry, together with $4$ marked points carrying inputs $\bar{x}'_{1,-1}(-\bar{L}-1)$ and $\bar{x}_{1,-1}$, plus the ``$\sim$'' nodes with input $\bar{y}_{1,-1}$, allowing arbitrarily many marked points at any level carrying inputs $\tau$'s.
Applying Corollary 2.1 and Section 1.5, we simplify the left-hand side of the statement as
\[\<\!\<\bar{x}'_{1,-1}(-\bar{L}-1), \bar{y}_{1,-1}, \bar{y}_{1,-1}, \bar{x}_{1,-1}\>\!\>_{1,4_1}.\]

{\bf Step 2: Modification.}

Notice that the contribution from curves

\begin{tikzpicture}[x=0.5pt,y=0.5pt,yscale=-1,xscale=1]
\draw   (7.74,70) -- (39.71,23.18) -- (86.61,55.21) -- (54.64,102.03) -- cycle ;
\draw  [color={rgb, 255:red, 74; green, 144; blue, 226 }  ,draw opacity=1 ][line width=3]  (38.58,24.83) .. controls (37.67,24.21) and (37.43,22.96) .. (38.06,22.05) .. controls (38.68,21.14) and (39.93,20.91) .. (40.84,21.53) .. controls (41.75,22.16) and (41.98,23.4) .. (41.36,24.31) .. controls (40.73,25.22) and (39.49,25.46) .. (38.58,24.83) -- cycle ;
\draw  [color={rgb, 255:red, 74; green, 144; blue, 226 }  ,draw opacity=1 ][line width=3]  (85.48,56.86) .. controls (84.57,56.24) and (84.33,54.99) .. (84.96,54.08) .. controls (85.58,53.17) and (86.83,52.94) .. (87.74,53.56) .. controls (88.65,54.19) and (88.88,55.43) .. (88.26,56.34) .. controls (87.63,57.25) and (86.39,57.49) .. (85.48,56.86) -- cycle ;
\draw  [color={rgb, 255:red, 126; green, 211; blue, 33 }  ,draw opacity=1 ][line width=3]  (6.6,71.65) .. controls (5.69,71.02) and (5.46,69.78) .. (6.09,68.87) .. controls (6.71,67.96) and (7.95,67.72) .. (8.87,68.35) .. controls (9.78,68.97) and (10.01,70.22) .. (9.39,71.13) .. controls (8.76,72.04) and (7.52,72.27) .. (6.6,71.65) -- cycle ;
\draw  [color={rgb, 255:red, 74; green, 144; blue, 226 }  ,draw opacity=1 ][line width=3]  (53.51,103.68) .. controls (52.59,103.05) and (52.36,101.81) .. (52.99,100.9) .. controls (53.61,99.99) and (54.86,99.75) .. (55.77,100.38) .. controls (56.68,101) and (56.91,102.25) .. (56.29,103.16) .. controls (55.66,104.07) and (54.42,104.3) .. (53.51,103.68) -- cycle ;
\draw    (40.84,21.53) .. controls (80.84,-8.47) and (100.84,51.53) .. (140.84,21.53) ;
\draw    (86.61,55.21) .. controls (126.61,25.21) and (146.61,85.21) .. (186.61,55.21) ;
\draw   (213.74,74) -- (245.71,27.18) -- (292.61,59.21) -- (260.64,106.03) -- cycle ;
\draw  [color={rgb, 255:red, 74; green, 144; blue, 226 }  ,draw opacity=1 ][line width=3]  (244.58,28.83) .. controls (243.67,28.21) and (243.43,26.96) .. (244.06,26.05) .. controls (244.68,25.14) and (245.93,24.91) .. (246.84,25.53) .. controls (247.75,26.16) and (247.98,27.4) .. (247.36,28.31) .. controls (246.73,29.22) and (245.49,29.46) .. (244.58,28.83) -- cycle ;
\draw  [color={rgb, 255:red, 74; green, 144; blue, 226 }  ,draw opacity=1 ][line width=3]  (291.48,60.86) .. controls (290.57,60.24) and (290.33,58.99) .. (290.96,58.08) .. controls (291.58,57.17) and (292.83,56.94) .. (293.74,57.56) .. controls (294.65,58.19) and (294.88,59.43) .. (294.26,60.34) .. controls (293.63,61.25) and (292.39,61.49) .. (291.48,60.86) -- cycle ;
\draw  [color={rgb, 255:red, 126; green, 211; blue, 33 }  ,draw opacity=1 ][line width=3]  (212.6,75.65) .. controls (211.69,75.02) and (211.46,73.78) .. (212.09,72.87) .. controls (212.71,71.96) and (213.95,71.72) .. (214.87,72.35) .. controls (215.78,72.97) and (216.01,74.22) .. (215.39,75.13) .. controls (214.76,76.04) and (213.52,76.27) .. (212.6,75.65) -- cycle ;
\draw  [color={rgb, 255:red, 74; green, 144; blue, 226 }  ,draw opacity=1 ][line width=3]  (259.51,107.68) .. controls (258.59,107.05) and (258.36,105.81) .. (258.99,104.9) .. controls (259.61,103.99) and (260.86,103.75) .. (261.77,104.38) .. controls (262.68,105) and (262.91,106.25) .. (262.29,107.16) .. controls (261.66,108.07) and (260.42,108.3) .. (259.51,107.68) -- cycle ;
\draw    (246.84,25.53) .. controls (286.84,-4.47) and (306.84,55.53) .. (346.84,25.53) ;
\draw    (292.61,59.21) .. controls (332.61,29.21) and (352.61,89.21) .. (392.61,59.21) ;
\draw    (260.64,106.03) .. controls (300.64,76.03) and (320.64,136.03) .. (360.64,106.03) ;
\draw   (477.74,76.12) -- (509.71,29.3) -- (556.61,61.33) -- (524.64,108.15) -- cycle ;
\draw  [color={rgb, 255:red, 74; green, 144; blue, 226 }  ,draw opacity=1 ][line width=3]  (508.58,30.95) .. controls (507.67,30.33) and (507.43,29.08) .. (508.06,28.17) .. controls (508.68,27.26) and (509.93,27.03) .. (510.84,27.65) .. controls (511.75,28.28) and (511.98,29.52) .. (511.36,30.43) .. controls (510.73,31.34) and (509.49,31.58) .. (508.58,30.95) -- cycle ;
\draw  [color={rgb, 255:red, 74; green, 144; blue, 226 }  ,draw opacity=1 ][line width=3]  (555.48,62.98) .. controls (554.57,62.36) and (554.33,61.11) .. (554.96,60.2) .. controls (555.58,59.29) and (556.83,59.06) .. (557.74,59.68) .. controls (558.65,60.31) and (558.88,61.55) .. (558.26,62.46) .. controls (557.63,63.37) and (556.39,63.61) .. (555.48,62.98) -- cycle ;
\draw  [color={rgb, 255:red, 126; green, 211; blue, 33 }  ,draw opacity=1 ][line width=3]  (427.54,78.31) .. controls (426.63,77.69) and (426.4,76.45) .. (427.02,75.53) .. controls (427.65,74.62) and (428.89,74.39) .. (429.8,75.01) .. controls (430.71,75.64) and (430.95,76.88) .. (430.32,77.8) .. controls (429.7,78.71) and (428.45,78.94) .. (427.54,78.31) -- cycle ;
\draw  [color={rgb, 255:red, 74; green, 144; blue, 226 }  ,draw opacity=1 ][line width=3]  (523.51,109.8) .. controls (522.59,109.17) and (522.36,107.93) .. (522.99,107.02) .. controls (523.61,106.11) and (524.86,105.87) .. (525.77,106.5) .. controls (526.68,107.12) and (526.91,108.37) .. (526.29,109.28) .. controls (525.66,110.19) and (524.42,110.42) .. (523.51,109.8) -- cycle ;
\draw    (510.84,27.65) .. controls (550.84,-2.35) and (570.84,57.65) .. (610.84,27.65) ;
\draw    (556.61,61.33) .. controls (596.61,31.33) and (616.61,91.33) .. (656.61,61.33) ;
\draw    (524.64,108.15) .. controls (564.64,78.15) and (584.64,138.15) .. (624.64,108.15) ;
\draw  [draw opacity=0] (427.02,75.53) .. controls (432.37,67.3) and (441.65,61.86) .. (452.19,61.86) .. controls (462.99,61.86) and (472.45,67.56) .. (477.74,76.12) -- (452.19,91.86) -- cycle ; \draw   (427.02,75.53) .. controls (432.37,67.3) and (441.65,61.86) .. (452.19,61.86) .. controls (462.99,61.86) and (472.45,67.56) .. (477.74,76.12) ;  
\draw  [draw opacity=0] (479.04,75.5) .. controls (473.86,84.5) and (464.15,90.55) .. (453.02,90.55) .. controls (441.91,90.55) and (432.21,84.51) .. (427.02,75.53) -- (453.02,60.55) -- cycle ; \draw   (479.04,75.5) .. controls (473.86,84.5) and (464.15,90.55) .. (453.02,90.55) .. controls (441.91,90.55) and (432.21,84.51) .. (427.02,75.53) ;  
\draw  [color={rgb, 255:red, 208; green, 2; blue, 27 }  ,draw opacity=1 ][line width=3]  (452.6,78.65) .. controls (451.69,78.02) and (451.46,76.78) .. (452.09,75.87) .. controls (452.71,74.96) and (453.95,74.72) .. (454.87,75.35) .. controls (455.78,75.97) and (456.01,77.22) .. (455.39,78.13) .. controls (454.76,79.04) and (453.52,79.27) .. (452.6,78.65) -- cycle ;
\draw (181,64.4) node [anchor=north west][inner sep=0.75pt]    {$=$};
\draw (401,60.4) node [anchor=north west][inner sep=0.75pt]    {$-$};
\draw (40,60) node [anchor=north west][inner sep=0.75pt]    {$\Gamma $};
\draw (250,60) node [anchor=north west][inner sep=0.75pt]    {$\Gamma $};
\draw (510,60) node [anchor=north west][inner sep=0.75pt]    {$\Gamma $};
\end{tikzpicture}

\noindent by counting the number of curves: $(\ell_2+1)! = (\ell_2+2)!-(\ell_2+1) \cdot (\ell_2+1)!$.
Directly computing the right-hand side's contribution yields the right-hand side of our claim.
\end{proof}

Thus, the total contribution is
\[\frac16 \<\!\<\bar{x}^{2a}_{1,-1}(-\bar{L}-1), \bar{y}_{1,-1}, \bar{y}_{1,-1}, \bar{y}_{1,-1}\>\!\>_{1,4_1}.\]

Alternatively, since the space of curves of each configuration is $0$-dimensional, switching (inputs for) the green marked point with one of the blue marked points does not affect the results.
This rewrites the contribution as 
\[\frac16 \<\!\<'\bar{x}^{2a}_{1,-1}(-\bar{L}-1), \bar{y}'_{1,-1}, \bar{y}_{1,-1}, \bar{y}_{1,-1}\>\!\>_{1,4_1},\]
where
\['\bar{x}^{2a}_{1,-1} = \bar{x}_{1,-1} - \sum_{\alpha,\beta} \phi_\alpha G_1^{\alpha\beta}\<\!\<\phi_\beta,\bar{t}_{2,1},\bar{x}_{1,-1}\>\!\>_{0,1_1+1_2+1_1}.\]

\subsection{Case $2b$}
\

In Case $2b$, we compute
\[\begin{array}{ll}
&\dsum_{\ell_2 \geq 0} \dfrac1{4!\ell_2!} \dsum_{\bar{\ell}_2,d} \frac{Q^d}{\bar{\ell}_2!} \cdot \ell_2 \<\bar{x}_{1,-1},\cdots; \bar{t}'_{2,1} (\bar{L}-1)^2, \bar{t}_{2,1} (\bar{L}-1), \cdots ; \tau_2, \cdots \>_{1,4_1+(\ell_2)_2+(\bar{\ell}_2)_2,d} \\
=& \dfrac1{c_0(\wedge^\ast N^\ast_{\M_{1,4+2\ell_2}^0/\M_{1,4+2\ell_2}})} \dsum_{\ell_2 \geq 0} str_{h_2} H^\ast(\M^{-1}_{1,4+2\ell_2},W_{2a,\ell_2}) \cdot str_{h_2} H^\ast (\pi_2^{-1}(\{C_2\}), V_{2a,\ell_2})\\
=&\dfrac1{24} \dfrac1{4 \cdot 8} \dsum_{\ell_2} \ell_2(\ell_2+1) \cdot str_{h_2} H^\ast (\pi_2^{-1}(\{C_2\}), V_{2a,\ell_2}),\end{array}\]
where
\[\begin{array}{ll}
W_{2a, \ell_2} & = \dfrac1{24 \cdot (\ell_2-1)! 2^{\ell_2}} (L_5L_6-1)^2 \dprod_{i=2}^{\ell_2} (L_{2i+3}L_{2i+4} -1);\\
V_{2a,\ell_2} &= \dsum_{\bar{\ell}_2,d} \frac{Q^d}{\bar{\ell}_2!2^{\bar{\ell}_2}} \left(\prod_{i=1}^4 ev_i^\ast \bar{x}_{1,-1} \cdot (ev_5^\ast \bar{t}'_{2,1} ev_6^\ast \bar{t}'_{2,1}) \cdot \dprod_{i=2}^{\ell_2} (ev_{2i+3}^\ast \bar{t}_{2,1} ev_{2i+4}^\ast \bar{t}_{2,1}) \cdot (\bar{ft}_2)_\ast \mathcal{T}\right).
\end{array}\]

\begin{tikzpicture}[x=0.75pt,y=0.75pt,yscale=-1,xscale=1]
\draw   (223.97,28.83) -- (288.96,28.73) -- (289.05,85.52) -- (224.06,85.63) -- cycle ;
\draw  [color={rgb, 255:red, 74; green, 144; blue, 226 }  ,draw opacity=1 ][line width=3]  (222.06,85.63) .. controls (222.06,84.52) and (222.96,83.63) .. (224.06,83.63) .. controls (225.17,83.63) and (226.06,84.52) .. (226.06,85.63) .. controls (226.06,86.73) and (225.17,87.63) .. (224.06,87.63) .. controls (222.96,87.63) and (222.06,86.73) .. (222.06,85.63) -- cycle ;
\draw  [color={rgb, 255:red, 74; green, 144; blue, 226 }  ,draw opacity=1 ][line width=3]  (287.05,85.52) .. controls (287.05,84.42) and (287.95,83.52) .. (289.05,83.52) .. controls (290.16,83.52) and (291.05,84.42) .. (291.05,85.52) .. controls (291.05,86.63) and (290.16,87.52) .. (289.05,87.52) .. controls (287.95,87.52) and (287.05,86.63) .. (287.05,85.52) -- cycle ;
\draw  [color={rgb, 255:red, 74; green, 144; blue, 226 }  ,draw opacity=1 ][line width=3]  (157.97,26.83) .. controls (157.97,25.73) and (158.87,24.83) .. (159.97,24.83) .. controls (161.08,24.83) and (161.97,25.73) .. (161.97,26.83) .. controls (161.97,27.94) and (161.08,28.83) .. (159.97,28.83) .. controls (158.87,28.83) and (157.97,27.94) .. (157.97,26.83) -- cycle ;
\draw  [color={rgb, 255:red, 74; green, 144; blue, 226 }  ,draw opacity=1 ][line width=3]  (286.96,28.73) .. controls (286.96,27.63) and (287.86,26.73) .. (288.96,26.73) .. controls (290.07,26.73) and (290.96,27.63) .. (290.96,28.73) .. controls (290.96,29.83) and (290.07,30.73) .. (288.96,30.73) .. controls (287.86,30.73) and (286.96,29.83) .. (286.96,28.73) -- cycle ;
\draw  [color={rgb, 255:red, 126; green, 211; blue, 33 }  ,draw opacity=1 ][line width=3]  (189,29.26) .. controls (189,28.15) and (189.9,27.26) .. (191,27.26) .. controls (192.1,27.26) and (193,28.15) .. (193,29.26) .. controls (193,30.36) and (192.1,31.26) .. (191,31.26) .. controls (189.9,31.26) and (189,30.36) .. (189,29.26) -- cycle ;
\draw  [draw opacity=0] (225,27.88) .. controls (216.63,36.31) and (204.9,41.56) .. (191.91,41.56) .. controls (178.47,41.56) and (166.38,35.94) .. (157.96,26.99) -- (191.91,-3.29) -- cycle ; \draw   (225,27.88) .. controls (216.63,36.31) and (204.9,41.56) .. (191.91,41.56) .. controls (178.47,41.56) and (166.38,35.94) .. (157.96,26.99) ;  
\draw  [draw opacity=0] (157.96,26.99) .. controls (166.28,18.94) and (177.74,13.97) .. (190.39,13.97) .. controls (204.03,13.97) and (216.28,19.75) .. (224.71,28.93) -- (190.39,58.82) -- cycle ; \draw   (157.96,26.99) .. controls (166.28,18.94) and (177.74,13.97) .. (190.39,13.97) .. controls (204.03,13.97) and (216.28,19.75) .. (224.71,28.93) ;  
\draw    (288.96,28.73) .. controls (328.96,-1.27) and (348.96,58.73) .. (388.96,28.73) ;
\draw    (124.06,85.63) .. controls (164.06,55.63) and (184.06,115.63) .. (224.06,85.63) ;
\draw    (289.05,85.52) .. controls (329.05,55.52) and (349.05,115.52) .. (389.05,85.52) ;
\draw (250,50) node [anchor=north west][inner sep=0.75pt]    {$\Gamma $};
\end{tikzpicture}

Combining computations via this curve (with Corollary 2.1 and Section 1.5) and previous computations in Case $2a$, the total contribution in Case $2b$ becomes 
\[\frac1{24}\<\!\<\bar{x}^{2b}_{1,-1}(-\bar{L}-1), \bar{y}_{1,-1}, \bar{y}_{1,-1}, \bar{y}_{1,-1}\>\!\>_{1,4_1},\]
where
\[\bar{x}^{2b}_{1,-1} = \sum_{\alpha,\beta} \phi_\alpha G_1^{\alpha\beta}\<\!\<\phi_\beta,\bar{t}'_{2,1},\bar{x}_{1,-1}\>\!\>_{0,1_1+1_2+1_1}.\]

\subsection{Case 2c}
\

In this case, we compute the invariants
\[\dfrac1{4!\ell_2!} \dsum_{\bar{\ell},d} \frac{Q^d}{\bar{\ell}!} \<\bar{x}_{1,-1},\cdots; \bar{t}_{2,1} (\bar{L}-1), \cdots ; \tau, \cdots \>_{1,4_1+(\ell_2)_2+\bar{\ell},d}.\]
Our primary tool is Lemma 2.1.

\

{\bf Step 0: The map $\pi$ and $\mathcal{M,N}$.}

We describe pointed elliptic curves via equations $y^2z=x^3+axz^2+bz^3$ in $\mathbb{CP}^2$, with the marked point $[0:1:0]$ and the $\mathbb{Z}_2$ symmetry $[x:y:z] \to [x:-y:z]$.
Define a (generically double cover) map $\mathcal{M}_{\ell_2}^2 \to \mathcal{M}_{0,4+\ell_2}$ on the smooth locus by sending $[x:y:z]$ to $[x:z]=[y^2-bz^2:x^2+az^2]$.
This map induces $\M_{\ell_2}^2 \to \M_{0,4+\ell_2}$.
Composing with the forgetful map $\M_{0,4+\ell_2} \to \M_{0,3+\ell_2}$ that forgets the fourth marked point, we get 
\[\pi:\mathcal{N}=\M_{\ell_2}^2(X,d) \to \mathcal{M} = \M_{0,(3+\ell_2)_1}.\]

The first three level-$1$ marked points on $\M_{\ell_2}^2$ are mapped to the three marked points on $\M_{0,3+\ell_2}$, while the fourth is forgotten.
The $\ell_2$ pairs of marked points in $\M^2_{\ell_2}$ are mapped to the last $\ell_2$ marked points in $\M_{0,3+\ell_2}$.
The fiber of $\pi$ remembers the configuration of the last level-$1$ marked point, which precisely determines the complex structure of the curve.

\

{\bf Step 1: Generating and distributing $\ell_2+1$.}

Define
\[f_{\ell}(x) = \frac18 \sum_{d,\bar{\ell}} \frac{Q^d}{\bar{\ell}!} str_h H^\ast\left(\pi^{-1}(\{C\}), \prod_{j=1,2,3} ev_j^\ast(\bar{x}_{1,-1}) \cdot ev_4^\ast (x) \cdot \prod_{i=1}^\ell ev_{2i+3}^\ast (\bar{t}_{2,1})ev_{2i+4}^\ast (\bar{t}_{2,1}) \prod ev^\ast \tau \right),\]
and let 
\[\delta(x) = \sum_{\alpha,\beta} \phi_\alpha G_1^{\alpha\beta} \<\!\<\phi_\beta, \bar{t}_{2,1},x\>\!\>_{0,1_1+1_2+1_1}.\]
\begin{lemma}
\[\begin{array}{ll}
&\dfrac1{2^{\ell+1}(\ell+1)!}\<\!\<\bar{x}_{1,-1},\bar{x}_{1,-1},\bar{x}_{1,-1},x;\bar{t}_{2,1}(\bar{L}-1),\cdots\>\!\>_{1,4_1+(\ell+1)_2} \\
=&f_{\ell+1}(x)+ \dfrac1{2^{\ell}\ell!} \<\!\<\bar{x}_{1,-1},\bar{x}_{1,-1},\bar{x}_{1,-1},\delta(x);\bar{t}_{2,1}(\bar{L}-1),\cdots\>\!\>_{1,4_1+(\ell)_2}.
\end{array}\]
\end{lemma}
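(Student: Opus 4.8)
The plan is to realize the left-hand side as a super-trace over the balanced locus $\M_{\ell+1}^2(X,d)$ and to separate the contribution of the domain curve from that of the map, using the fibration $\pi:\mathcal{N}=\M_{\ell+1}^2(X,d)\to\mathcal{M}=\M_{0,(3+(\ell+1))_1}$ constructed in Step $0$. Invoking Lemma 2.1 together with base-independence, I fix the base curve to the specific $\Gamma=T^2/\mathbb{Z}_2$ of the figure, so that the \emph{smooth-domain} part of the super-trace factors as a combinatorial base number times the fixed-domain fiber invariant. On that fiber the three level-$1$ points carry $\bar{x}_{1,-1}$, the fourth (whose position is the fiber coordinate, i.e.\ the fourth branch point) carries $x$, and the $\ell+1$ level-$2$ pairs carry $\bar{t}_{2,1}$ with the $\bar L$-classes stripped off; this is exactly $f_{\ell+1}(x)$. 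The $\bar L$-dependence of the ancestor insertions $\bar{t}_{2,1}(\bar L-1)$, being pulled back from the Deligne--Mumford base, descends onto the base, where the products $\prod_i(\bar L_{2i+3}\bar L_{2i+4}-1)$ are evaluated by the permutation-equivariant dilaton computations of Section 1.5; this is the step that ``generates'' the factor $\ell+1$.

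First I would make the separation precise by checking the two hypotheses of Lemma 2.1 for the relevant $(W,V)$: that the fiber fake super-trace is independent of the base point (so that fixing $\Gamma$ is legitimate and $f_{\ell+1}$ is well defined), and that $str_{\tilde h}W$ lies in top degree, so that only the $c_0(\wedge^\ast N^\ast)$ term of the base integrand survives. Combining this normal-bundle factor with the dilaton value from Section 1.5 produces the constant relating the symmetrized weight $1/(2^{\ell+1}(\ell+1)!)$ to the $1/8$ that normalizes $f_{\ell+1}$, yielding the first term on the right-hand side.

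The second term is the genuine new ingredient and, I expect, the main obstacle. It comes from the boundary stratum that the clean separation does not see: the locus where the fourth level-$1$ point collides, after the $\mathbb{Z}_2$-quotient, with the image of one of the level-$2$ pairs, so that a rational, $\mathbb{Z}_2$-symmetric tail bubbles off the genus-$1$ curve. On this tail sit the fourth marked point (input $x$), one distinguished $2$-cycle (input $\bar{t}_{2,1}$), and the node joining it to the main component, while the remaining genus-$1$ curve retains $\ell$ level-$2$ pairs. Applying the permutation-equivariant splitting axiom of Section 1.2 across this node factorizes the contribution: summing over the dual basis at the node, the tail yields precisely $\sum_{\alpha,\beta}\phi_\alpha G_1^{\alpha\beta}\<\!\<\phi_\beta,\bar{t}_{2,1},x\>\!\>_{0,1_1+1_2+1_1}=\delta(x)$, and the main component gives the genus-$1$ correlator with $\ell$ pairs whose fourth insertion has been replaced by $\delta(x)$.

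It then remains to reconcile the symmetry factors. There are $\ell+1$ choices of which pair bubbles off, and the order-two swap of the two points of that distinguished $2$-cycle (together with the node) contributes a further factor of $2$; multiplying the symmetrized weight accordingly gives $(\ell+1)\cdot 2/(2^{\ell+1}(\ell+1)!)=1/(2^{\ell}\ell!)$, matching the normalization of the correction term exactly. The principal difficulties I anticipate are (i) identifying this degeneration locus correctly and tracking the $\mathbb{Z}_2$-equivariant structure, i.e.\ the lift $\tilde h$, on the rational tail and at the node, so that the splitting axiom applies with the correct twist, and (ii) the bookkeeping of the $\tau$-insertions and their pushforward $(\bar{ft})_\ast\mathcal{T}$ through both the smooth and boundary contributions; once these are settled, the combinatorial matching in the last step is a routine verification rather than a conceptual one.
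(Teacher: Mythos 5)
Your geometric picture agrees with the paper's proof in outline: the first term does come from the part of the integrand that honestly lives on the base of $\pi$, the second from the divisor where the fourth branch point collides with the image of a level-$2$ pair (a $\mathbb{Z}_2$-symmetric rational tail bubbling off), evaluated by the splitting axiom; and your symmetry-factor count $(\ell+1)\cdot 2/(2^{\ell+1}(\ell+1)!)=1/(2^{\ell}\ell!)$ is the right one. However, there is a genuine gap at the central step, and it is exactly the step you describe as ``the boundary stratum that the clean separation does not see.'' A super-trace is a sheaf-theoretic pushforward, and it does not split additively into a contribution of the open stratum plus a contribution of the boundary; no such stratum-wise excision is available for $str_h H^\ast(\mathcal{N},V)$. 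Relatedly, your assertion that the $\bar{L}$-dependence of the insertions $\bar{t}_{2,1}(\bar{L}-1)$ ``descends onto the base'' is false as stated: those classes are pulled back from the genus-$1$ Deligne--Mumford space, not along $\pi$, and the failure of descent along $\pi$ is precisely what produces the second term. If the insertions really were pullbacks along $\pi$, Lemma 2.1 would apply on the nose and the right-hand side would consist of $f_{\ell+1}(x)$ alone, with no $\delta(x)$ correction.

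What the paper does, and what your outline is missing, is a decomposition of the \emph{integrand} rather than of the space: the K-theoretic comparison of cotangent lines
\[
L_{2i+3}L_{2i+4}-1 \;=\; 2\bigl(\pi^\ast(L_{i+3}-1)+i_\ast\mathcal{O}_{D_i}\bigr)+\mathcal{O}\bigl((-L_{i+3}-1)^2\bigr),
\]
where $D_i$ is the divisor you identified, on which the product $L_{2i+3}L_{2i+4}$ becomes trivial (the tail is rigid) while the pullback class acquires an $\mathcal{O}(-D_i)$-twist. Substituting this identity at every level-$2$ pair and expanding, additivity of the pushforward in the integrand yields exactly three kinds of terms: the all-pullback term, to which Lemma 2.1 legitimately applies and which gives $f_{\ell+1}(x)$ (the factor $2^{\ell+1}$ from labeling the points of each pair canceling the normalization); the terms with exactly one $i_\ast\mathcal{O}_{D_i}$, which restrict to $D_i$ and are computed by the splitting axiom to give the $\delta(x)$ correlator with $\ell$ pairs; and cross terms, which vanish because the $D_i$ are pairwise disjoint (so $i_\ast\mathcal{O}_{D_i}\cdot i_\ast\mathcal{O}_{D_j}=0$ for $i\neq j$) together with the dimension count that kills the quadratic remainder. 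This identity is the actual content of the lemma: it is what turns your heuristic ``smooth part plus boundary part'' into two well-defined additive contributions, and it is also the only argument on offer for why deeper strata, where two or more pairs have bubbled off, contribute nothing. Without it, or an equivalent statement, the two terms on your right-hand side cannot be combined as if the super-trace localized to strata, and the proof does not close.
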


\begin{proof}
The dimension
\[\dim\M_{1,4+2\ell} - \dim\M^2_\ell = \ell+3.\]

By definition of invariants and Section 1.3, we have
\[\begin{array}{ll}
&\<\!\<\bar{x}_{1,-1},\bar{x}_{1,-1},\bar{x}_{1,-1},\delta(x);\bar{t}_{2,1}(\bar{L}-1),\cdots\>\!\>_{1,4_1+(\ell)_2}\\
=&str_h H^\ast \left(\M_{1,4+2\ell_2+|\bar{\ell}|} (X,d), V_{2c}\right)\\
=&\dfrac1{2^{\ell+3}}str_h H^\ast \left(\M^2_\ell(X,d), V_{2c}\right)
\end{array}\]
for some appropriate $V_{2c}$.

At the level-2 marked points, consider the product $L_{2i+3} L_{2i+4}$ of the universal cotangent bundles on $\mathcal{M}_\ell^2$. 
Away from the divisor $D_i$ (defined by the $(i+3)$-rd level-1 marked point), this product coincides with the square of $L_4$ pulled back along $\pi$.
On $D_i$, however, the product $L_{2i+3} L_{2i+4}$ is trivial, while the pullback of $L_4^2$ is $2\mathcal{O}(-D_i)$. 
Consequently, the difference between these two sheaves is $2 i_\ast \mathcal{O}_{D_i}$, which is the push forward of the structure sheaf of $D_i$ under the inclusion $i: D_i \hookrightarrow \mathcal{M}_{\ell_2}$. 
From this, we deduce
\[L_{2i+3} L_{2i+4} - 1 = 2 \left( \pi^* (L_{i+3} - 1) + i_\ast \mathcal{O}_{D_i} \right) + \mathcal{O}((-L_{i+3} - 1)^2).\]
The last term does not contribute to correlators by a dimension argument.
Applying the above formula to all level-$2$ marked points, the result expands into $2^{\ell_2} \ell_2!$ terms.
Since the divisors $D_i$ are pairwise disjoint, all but the following two types of terms vanish by the (cohomological) Dilaton equation:

\begin{enumerate}
\item
$\dfrac{1}{2^{\ell} \ell!} \dprod_i \pi^* (L_{i+3} - 1)$.
\item
$\dfrac{1}{2^{\ell} \ell!} \dsum_i i_* D_i \cdot \dprod_2 \pi^* (L_{i+3} - 1)$.
\end{enumerate}

Notice that for each $i$, there are two ways to label marked points $2i+3,2i+4$, contributing a factor $2^{\ell+1}$.
The first type contributes $f_{\ell+1}(x)$.
For the second type, the splitting axiom implies that the contributions from maps with these base curves coincide, contributing the second term on the right-hand side of the lemma.

\tikzset{every picture/.style={line width=0.75pt}}
\begin{tikzpicture}[x=0.75pt,y=0.75pt,yscale=-1,xscale=1]
\draw  [draw opacity=0] (101.89,15.06) .. controls (102.26,15.04) and (102.63,15.04) .. (103,15.03) .. controls (119.57,14.97) and (133.05,27.44) .. (133.12,42.89) .. controls (133.18,58.35) and (119.8,70.93) .. (103.23,71) .. controls (103.18,71) and (103.12,71) .. (103.06,71) -- (103.12,43.02) -- cycle ; \draw   (101.89,15.06) .. controls (102.26,15.04) and (102.63,15.04) .. (103,15.03) .. controls (119.57,14.97) and (133.05,27.44) .. (133.12,42.89) .. controls (133.18,58.35) and (119.8,70.93) .. (103.23,71) .. controls (103.18,71) and (103.12,71) .. (103.06,71) ;  
\draw  [color={rgb, 255:red, 74; green, 144; blue, 226 }  ,draw opacity=1 ][line width=3]  (180.93,43.16) .. controls (180.94,43.95) and (180.29,44.6) .. (179.48,44.6) .. controls (178.67,44.6) and (178.01,43.96) .. (178.01,43.17) .. controls (178.01,42.37) and (178.66,41.73) .. (179.47,41.72) .. controls (180.27,41.72) and (180.93,42.36) .. (180.93,43.16) -- cycle ;
\draw  [color={rgb, 255:red, 208; green, 2; blue, 27 }  ,draw opacity=1 ][line width=3]  (158.25,41.51) .. controls (158.25,42.31) and (157.6,42.95) .. (156.79,42.96) .. controls (155.98,42.96) and (155.33,42.32) .. (155.32,41.52) .. controls (155.32,40.73) and (155.97,40.08) .. (156.78,40.08) .. controls (157.59,40.08) and (158.24,40.72) .. (158.25,41.51) -- cycle ;
\draw  [draw opacity=0] (132.13,42.43) .. controls (138.2,36.45) and (146.68,32.72) .. (156.08,32.67) .. controls (165.81,32.63) and (174.59,36.55) .. (180.75,42.85) -- (156.22,64.93) -- cycle ; \draw   (132.13,42.43) .. controls (138.2,36.45) and (146.68,32.72) .. (156.08,32.67) .. controls (165.81,32.63) and (174.59,36.55) .. (180.75,42.85) ;  
\draw  [draw opacity=0] (180.75,43.23) .. controls (174.71,48.93) and (166.43,52.47) .. (157.28,52.51) .. controls (147.41,52.55) and (138.51,48.52) .. (132.33,42.05) -- (157.14,20.25) -- cycle ; \draw   (180.75,43.23) .. controls (174.71,48.93) and (166.43,52.47) .. (157.28,52.51) .. controls (147.41,52.55) and (138.51,48.52) .. (132.33,42.05) ;  
\draw  [draw opacity=0] (293.87,10.99) .. controls (313.34,11.1) and (329.12,25.55) .. (329.2,43.42) .. controls (329.27,61.41) and (313.4,76.07) .. (293.74,76.15) .. controls (293.68,76.15) and (293.61,76.15) .. (293.54,76.15) -- (293.61,43.57) -- cycle ; \draw   (293.87,10.99) .. controls (313.34,11.1) and (329.12,25.55) .. (329.2,43.42) .. controls (329.27,61.41) and (313.4,76.07) .. (293.74,76.15) .. controls (293.68,76.15) and (293.61,76.15) .. (293.54,76.15) ;  
\draw  [color={rgb, 255:red, 74; green, 144; blue, 226 }  ,draw opacity=1 ][line width=3]  (329.93,43.16) .. controls (329.94,43.95) and (329.29,44.6) .. (328.48,44.6) .. controls (327.67,44.6) and (327.01,43.96) .. (327.01,43.17) .. controls (327.01,42.37) and (327.66,41.73) .. (328.47,41.72) .. controls (329.27,41.72) and (329.93,42.36) .. (329.93,43.16) -- cycle ;
\draw (87,33.4) node [anchor=north west][inner sep=0.75pt]    {$\cdots $};
\draw (222,34.4) node [anchor=north west][inner sep=0.75pt]    {$=$};
\draw (140,7.4) node [anchor=north west][inner sep=0.75pt]    {$\overline{t}_{2,1}$};
\draw (193,33.4) node [anchor=north west][inner sep=0.75pt]    {$x$};
\draw (273,32.4) node [anchor=north west][inner sep=0.75pt]    {$\cdots $};
\draw (342,36.4) node [anchor=north west][inner sep=0.75pt]    {$\delta ( x)$};
\end{tikzpicture}
\end{proof}

Inductively applying the lemma yields
\[\dfrac1{2^{\ell}\ell!} \<\!\<\bar{x}_{1,-1},\bar{x}_{1,-1},\bar{x}_{1,-1},\delta(x);\bar{t}_{2,1}(\bar{L}-1),\cdots\>\!\>_{1,4_1+(\ell)_2} = \sum_{n=0}^\ell f_{\ell-n}(\delta^n(x)).\]
By the splitting axiom interpretation of $\delta$, all of $f_{\ell-n}(\delta^n(x))$ are equal to $f_\ell(x)$.

An argument identical to Case $2a$ shows that summing over all $\ell$ gives the total contribution
\[\frac1{24}\<\!\<\bar{x}_{1,-1}, \bar{y}_{1,-1}, \bar{y}_{1,-1}, \bar{x}_{1,-1}\>\!\>_{1,4_1}.\]

\

{\bf Step 2: Modifications}

Next, observe that contributions from fibers of $ft_i: \M_{\ell_2+1}^2(X,d) \to \M_1^2 \to \M_{0,4}$ forgetting all but the $i$-th pair of marked points do not depend on the specific fiber.
Consequently, the contributions of the following curves agree.

\tikzset{every picture/.style={line width=0.75pt}}
\begin{tikzpicture}[x=0.7pt,y=0.7pt,yscale=-1,xscale=1]
\draw   (175.67,19.36) -- (223.18,19.29) -- (223.24,60.14) -- (175.73,60.21) -- cycle ;
\draw  [color={rgb, 255:red, 74; green, 144; blue, 226 }  ,draw opacity=1 ][line width=3]  (221.78,60.14) .. controls (221.78,59.34) and (222.43,58.7) .. (223.24,58.7) .. controls (224.05,58.7) and (224.7,59.34) .. (224.7,60.14) .. controls (224.7,60.93) and (224.05,61.58) .. (223.24,61.58) .. controls (222.43,61.58) and (221.78,60.93) .. (221.78,60.14) -- cycle ;
\draw  [color={rgb, 255:red, 74; green, 144; blue, 226 }  ,draw opacity=1 ][line width=3]  (127.43,17.92) .. controls (127.43,17.13) and (128.08,16.49) .. (128.89,16.49) .. controls (129.7,16.49) and (130.35,17.13) .. (130.35,17.92) .. controls (130.35,18.72) and (129.7,19.36) .. (128.89,19.36) .. controls (128.08,19.36) and (127.43,18.72) .. (127.43,17.92) -- cycle ;
\draw  [color={rgb, 255:red, 74; green, 144; blue, 226 }  ,draw opacity=1 ][line width=3]  (221.71,19.29) .. controls (221.71,18.49) and (222.37,17.85) .. (223.17,17.85) .. controls (223.98,17.85) and (224.64,18.49) .. (224.64,19.29) .. controls (224.64,20.08) and (223.98,20.73) .. (223.17,20.73) .. controls (222.37,20.73) and (221.71,20.08) .. (221.71,19.29) -- cycle ;
\draw  [color={rgb, 255:red, 208; green, 2; blue, 27 }  ,draw opacity=1 ][line width=3]  (150.11,19.67) .. controls (150.11,18.87) and (150.76,18.23) .. (151.57,18.23) .. controls (152.38,18.23) and (153.03,18.87) .. (153.03,19.67) .. controls (153.03,20.46) and (152.38,21.11) .. (151.57,21.11) .. controls (150.76,21.11) and (150.11,20.46) .. (150.11,19.67) -- cycle ;
\draw  [draw opacity=0] (176.23,18.87) .. controls (170.13,24.82) and (161.63,28.51) .. (152.24,28.51) .. controls (142.51,28.51) and (133.75,24.56) .. (127.6,18.23) -- (152.24,-3.74) -- cycle ; \draw   (176.23,18.87) .. controls (170.13,24.82) and (161.63,28.51) .. (152.24,28.51) .. controls (142.51,28.51) and (133.75,24.56) .. (127.6,18.23) ;  
\draw  [draw opacity=0] (127.61,17.85) .. controls (133.68,12.17) and (141.97,8.67) .. (151.12,8.67) .. controls (160.99,8.67) and (169.87,12.75) .. (176.02,19.24) -- (151.12,40.93) -- cycle ; \draw   (127.61,17.85) .. controls (133.68,12.17) and (141.97,8.67) .. (151.12,8.67) .. controls (160.99,8.67) and (169.87,12.75) .. (176.02,19.24) ;  
\draw    (223.17,19.29) .. controls (252.41,-2.29) and (267.03,40.87) .. (296.27,19.29) ;
\draw    (223.24,60.14) .. controls (252.48,38.56) and (267.1,81.71) .. (296.33,60.14) ;
\draw  [color={rgb, 255:red, 74; green, 144; blue, 226 }  ,draw opacity=1 ][line width=3]  (174.4,107.29) .. controls (173.59,107.29) and (172.94,106.65) .. (172.94,105.85) .. controls (172.94,105.06) and (173.59,104.41) .. (174.4,104.41) .. controls (175.2,104.41) and (175.86,105.06) .. (175.86,105.85) .. controls (175.86,106.65) and (175.21,107.29) .. (174.4,107.29) -- cycle ;
\draw  [color={rgb, 255:red, 208; green, 2; blue, 27 }  ,draw opacity=1 ][line width=3]  (146.45,84.93) .. controls (145.65,84.93) and (144.99,84.29) .. (144.99,83.5) .. controls (144.99,82.7) and (145.64,82.06) .. (146.45,82.06) .. controls (147.26,82.06) and (147.91,82.7) .. (147.91,83.5) .. controls (147.91,84.29) and (147.26,84.93) .. (146.45,84.93) -- cycle ;
\draw  [draw opacity=0] (174.96,58.9) .. controls (181.23,64.92) and (185.15,73.44) .. (185.15,82.87) .. controls (185.15,92.64) and (180.97,101.41) .. (174.32,107.48) -- (152.37,82.89) -- cycle ; \draw   (174.96,58.9) .. controls (181.23,64.92) and (185.15,73.44) .. (185.15,82.87) .. controls (185.15,92.64) and (180.97,101.41) .. (174.32,107.48) ;  
\draw  [draw opacity=0] (174.7,107.49) .. controls (168.7,101.49) and (164.99,93.17) .. (164.99,83.98) .. controls (164.98,74.08) and (169.29,65.19) .. (176.11,59.12) -- (197.77,83.97) -- cycle ; \draw   (174.7,107.49) .. controls (168.7,101.49) and (164.99,93.17) .. (164.99,83.98) .. controls (164.98,74.08) and (169.29,65.19) .. (176.11,59.12) ;  
\draw   (128.18,83.5) .. controls (128.18,73.56) and (136.36,65.51) .. (146.45,65.51) .. controls (156.54,65.51) and (164.73,73.56) .. (164.73,83.5) .. controls (164.73,93.43) and (156.54,101.48) .. (146.45,101.48) .. controls (136.36,101.48) and (128.18,93.43) .. (128.18,83.5) -- cycle ;
\draw  [color={rgb, 255:red, 208; green, 2; blue, 27 }  ,draw opacity=1 ][line width=3]  (75.55,87.09) .. controls (74.74,87.09) and (74.09,86.45) .. (74.09,85.65) .. controls (74.09,84.86) and (74.74,84.22) .. (75.55,84.21) .. controls (76.36,84.21) and (77.01,84.86) .. (77.01,85.65) .. controls (77.01,86.45) and (76.36,87.09) .. (75.55,87.09) -- cycle ;
\draw   (57.28,85.65) .. controls (57.28,75.72) and (65.46,67.67) .. (75.55,67.67) .. controls (85.64,67.67) and (93.82,75.72) .. (93.82,85.65) .. controls (93.82,95.58) and (85.64,103.63) .. (75.55,103.63) .. controls (65.46,103.63) and (57.28,95.58) .. (57.28,85.65) -- cycle ;
\draw  [color={rgb, 255:red, 208; green, 2; blue, 27 }  ,draw opacity=1 ][line width=3]  (29.5,86.37) .. controls (28.7,86.37) and (28.04,85.73) .. (28.04,84.93) .. controls (28.04,84.14) and (28.69,83.5) .. (29.5,83.5) .. controls (30.31,83.5) and (30.96,84.14) .. (30.96,84.93) .. controls (30.96,85.73) and (30.31,86.37) .. (29.5,86.37) -- cycle ;
\draw   (20,85.65) .. controls (20,75.72) and (28.18,67.67) .. (38.27,67.67) .. controls (48.37,67.67) and (56.55,75.72) .. (56.55,85.65) .. controls (56.55,95.58) and (48.37,103.63) .. (38.27,103.63) .. controls (28.18,103.63) and (20,95.58) .. (20,85.65) -- cycle ;
\draw  [color={rgb, 255:red, 208; green, 2; blue, 27 }  ,draw opacity=1 ][line width=3]  (47.05,87.09) .. controls (46.24,87.09) and (45.58,86.45) .. (45.58,85.65) .. controls (45.58,84.86) and (46.24,84.22) .. (47.04,84.21) .. controls (47.85,84.21) and (48.51,84.86) .. (48.51,85.65) .. controls (48.51,86.45) and (47.85,87.09) .. (47.05,87.09) -- cycle ;
\draw   (518.57,20.36) -- (566.08,20.29) -- (566.14,61.14) -- (518.64,61.21) -- cycle ;
\draw  [color={rgb, 255:red, 74; green, 144; blue, 226 }  ,draw opacity=1 ][line width=3]  (564.68,61.14) .. controls (564.68,60.34) and (565.34,59.7) .. (566.14,59.7) .. controls (566.95,59.7) and (567.6,60.34) .. (567.6,61.14) .. controls (567.6,61.93) and (566.95,62.58) .. (566.14,62.58) .. controls (565.34,62.58) and (564.68,61.93) .. (564.68,61.14) -- cycle ;
\draw  [color={rgb, 255:red, 74; green, 144; blue, 226 }  ,draw opacity=1 ][line width=3]  (564.62,20.29) .. controls (564.62,19.49) and (565.27,18.85) .. (566.08,18.85) .. controls (566.89,18.85) and (567.54,19.49) .. (567.54,20.29) .. controls (567.54,21.08) and (566.89,21.73) .. (566.08,21.73) .. controls (565.27,21.73) and (564.62,21.08) .. (564.62,20.29) -- cycle ;
\draw    (566.08,20.29) .. controls (595.32,-1.29) and (609.93,41.87) .. (639.17,20.29) ;
\draw    (566.14,61.14) .. controls (595.38,39.56) and (610,82.71) .. (639.24,61.14) ;
\draw  [color={rgb, 255:red, 74; green, 144; blue, 226 }  ,draw opacity=1 ][line width=3]  (517.3,108.29) .. controls (516.49,108.29) and (515.84,107.65) .. (515.84,106.85) .. controls (515.84,106.06) and (516.49,105.41) .. (517.3,105.41) .. controls (518.11,105.41) and (518.76,106.06) .. (518.76,106.85) .. controls (518.76,107.65) and (518.11,108.29) .. (517.3,108.29) -- cycle ;
\draw  [color={rgb, 255:red, 208; green, 2; blue, 27 }  ,draw opacity=1 ][line width=3]  (489.36,85.93) .. controls (488.55,85.93) and (487.89,85.29) .. (487.89,84.5) .. controls (487.89,83.7) and (488.55,83.06) .. (489.36,83.06) .. controls (490.16,83.06) and (490.82,83.7) .. (490.82,84.5) .. controls (490.82,85.29) and (490.16,85.93) .. (489.36,85.93) -- cycle ;
\draw  [draw opacity=0] (517.86,59.9) .. controls (524.14,65.92) and (528.05,74.44) .. (528.05,83.87) .. controls (528.06,93.64) and (523.88,102.41) .. (517.22,108.48) -- (495.27,83.89) -- cycle ; \draw   (517.86,59.9) .. controls (524.14,65.92) and (528.05,74.44) .. (528.05,83.87) .. controls (528.06,93.64) and (523.88,102.41) .. (517.22,108.48) ;  
\draw  [draw opacity=0] (517.6,108.49) .. controls (511.61,102.49) and (507.89,94.17) .. (507.89,84.98) .. controls (507.89,75.08) and (512.19,66.19) .. (519.02,60.12) -- (540.67,84.97) -- cycle ; \draw   (517.6,108.49) .. controls (511.61,102.49) and (507.89,94.17) .. (507.89,84.98) .. controls (507.89,75.08) and (512.19,66.19) .. (519.02,60.12) ;  
\draw   (471.08,84.5) .. controls (471.08,74.56) and (479.26,66.51) .. (489.36,66.51) .. controls (499.45,66.51) and (507.63,74.56) .. (507.63,84.5) .. controls (507.63,94.43) and (499.45,102.48) .. (489.36,102.48) .. controls (479.26,102.48) and (471.08,94.43) .. (471.08,84.5) -- cycle ;
\draw  [color={rgb, 255:red, 208; green, 2; blue, 27 }  ,draw opacity=1 ][line width=3]  (418.46,88.09) .. controls (417.65,88.09) and (416.99,87.45) .. (416.99,86.65) .. controls (416.99,85.86) and (417.65,85.22) .. (418.45,85.21) .. controls (419.26,85.21) and (419.92,85.86) .. (419.92,86.65) .. controls (419.92,87.45) and (419.26,88.09) .. (418.46,88.09) -- cycle ;
\draw   (400.18,86.65) .. controls (400.18,76.72) and (408.36,68.67) .. (418.45,68.67) .. controls (428.55,68.67) and (436.73,76.72) .. (436.73,86.65) .. controls (436.73,96.58) and (428.55,104.63) .. (418.45,104.63) .. controls (408.36,104.63) and (400.18,96.58) .. (400.18,86.65) -- cycle ;
\draw  [color={rgb, 255:red, 208; green, 2; blue, 27 }  ,draw opacity=1 ][line width=3]  (372.41,87.37) .. controls (371.6,87.37) and (370.94,86.73) .. (370.94,85.93) .. controls (370.94,85.14) and (371.6,84.5) .. (372.41,84.5) .. controls (373.21,84.5) and (373.87,85.14) .. (373.87,85.93) .. controls (373.87,86.73) and (373.21,87.37) .. (372.41,87.37) -- cycle ;
\draw   (362.9,86.65) .. controls (362.9,76.72) and (371.08,68.67) .. (381.18,68.67) .. controls (391.27,68.67) and (399.45,76.72) .. (399.45,86.65) .. controls (399.45,96.58) and (391.27,104.63) .. (381.18,104.63) .. controls (371.08,104.63) and (362.9,96.58) .. (362.9,86.65) -- cycle ;
\draw  [color={rgb, 255:red, 208; green, 2; blue, 27 }  ,draw opacity=1 ][line width=3]  (389.95,88.09) .. controls (389.14,88.09) and (388.49,87.45) .. (388.49,86.65) .. controls (388.49,85.86) and (389.14,85.22) .. (389.95,85.21) .. controls (390.76,85.21) and (391.41,85.86) .. (391.41,86.65) .. controls (391.41,87.45) and (390.76,88.09) .. (389.95,88.09) -- cycle ;
\draw  [color={rgb, 255:red, 74; green, 144; blue, 226 }  ,draw opacity=1 ][line width=3]  (518.57,20.36) .. controls (517.77,20.36) and (517.11,19.72) .. (517.11,18.92) .. controls (517.11,18.13) and (517.76,17.49) .. (518.57,17.49) .. controls (519.38,17.49) and (520.03,18.13) .. (520.03,18.92) .. controls (520.03,19.72) and (519.38,20.36) .. (518.57,20.36) -- cycle ;

\draw (93.58,73.02) node [anchor=north west][inner sep=0.75pt]    {$\cdots $};
\draw (33.34,120.49) node [anchor=north west][inner sep=0.75pt]    {$\ell -1$};
\draw (20.1,108.49) node [anchor=north west][inner sep=0.75pt]    {$\underbrace{\ \ \ \ \ \ \ \ \ \ \ \ \ \ \ \ \ \ \ \ \ \ \ \ \ \ }$};
\draw (73.85,120.81) node [anchor=north west][inner sep=0.75pt]   [align=left] {red points};
\draw (436.48,74.02) node [anchor=north west][inner sep=0.75pt]    {$\cdots $};
\draw (399.24,121.49) node [anchor=north west][inner sep=0.75pt]    {$\ell $};
\draw (363,109.49) node [anchor=north west][inner sep=0.75pt]    {$\underbrace{\ \ \ \ \ \ \ \ \ \ \ \ \ \ \ \ \ \ \ \ \ \ \ \ \ }$};
\draw (416.75,121.81) node [anchor=north west][inner sep=0.75pt]   [align=left] {red points};
\draw (310,74.4) node [anchor=north west][inner sep=0.75pt]    {$=$};

\end{tikzpicture}

Hence, the following two types of curves have equal contributions:

\begin{tikzpicture}[x=0.5pt,y=0.5pt,yscale=-1,xscale=1]
\draw   (7.74,70) -- (39.71,23.18) -- (86.61,55.21) -- (54.64,102.03) -- cycle ;
\draw  [color={rgb, 255:red, 74; green, 144; blue, 226 }  ,draw opacity=1 ][line width=3]  (38.58,24.83) .. controls (37.67,24.21) and (37.43,22.96) .. (38.06,22.05) .. controls (38.68,21.14) and (39.93,20.91) .. (40.84,21.53) .. controls (41.75,22.16) and (41.98,23.4) .. (41.36,24.31) .. controls (40.73,25.22) and (39.49,25.46) .. (38.58,24.83) -- cycle ;
\draw  [color={rgb, 255:red, 74; green, 144; blue, 226 }  ,draw opacity=1 ][line width=3]  (85.48,56.86) .. controls (84.57,56.24) and (84.33,54.99) .. (84.96,54.08) .. controls (85.58,53.17) and (86.83,52.94) .. (87.74,53.56) .. controls (88.65,54.19) and (88.88,55.43) .. (88.26,56.34) .. controls (87.63,57.25) and (86.39,57.49) .. (85.48,56.86) -- cycle ;
\draw  [color={rgb, 255:red, 126; green, 211; blue, 33 }  ,draw opacity=1 ][line width=3]  (6.6,71.65) .. controls (5.69,71.02) and (5.46,69.78) .. (6.09,68.87) .. controls (6.71,67.96) and (7.95,67.72) .. (8.87,68.35) .. controls (9.78,68.97) and (10.01,70.22) .. (9.39,71.13) .. controls (8.76,72.04) and (7.52,72.27) .. (6.6,71.65) -- cycle ;
\draw  [color={rgb, 255:red, 74; green, 144; blue, 226 }  ,draw opacity=1 ][line width=3]  (53.51,103.68) .. controls (52.59,103.05) and (52.36,101.81) .. (52.99,100.9) .. controls (53.61,99.99) and (54.86,99.75) .. (55.77,100.38) .. controls (56.68,101) and (56.91,102.25) .. (56.29,103.16) .. controls (55.66,104.07) and (54.42,104.3) .. (53.51,103.68) -- cycle ;
\draw    (40.84,21.53) .. controls (80.84,-8.47) and (100.84,51.53) .. (140.84,21.53) ;
\draw    (86.61,55.21) .. controls (126.61,25.21) and (146.61,85.21) .. (186.61,55.21) ;
\draw   (213.74,74) -- (245.71,27.18) -- (292.61,59.21) -- (260.64,106.03) -- cycle ;
\draw  [color={rgb, 255:red, 74; green, 144; blue, 226 }  ,draw opacity=1 ][line width=3]  (244.58,28.83) .. controls (243.67,28.21) and (243.43,26.96) .. (244.06,26.05) .. controls (244.68,25.14) and (245.93,24.91) .. (246.84,25.53) .. controls (247.75,26.16) and (247.98,27.4) .. (247.36,28.31) .. controls (246.73,29.22) and (245.49,29.46) .. (244.58,28.83) -- cycle ;
\draw  [color={rgb, 255:red, 74; green, 144; blue, 226 }  ,draw opacity=1 ][line width=3]  (291.48,60.86) .. controls (290.57,60.24) and (290.33,58.99) .. (290.96,58.08) .. controls (291.58,57.17) and (292.83,56.94) .. (293.74,57.56) .. controls (294.65,58.19) and (294.88,59.43) .. (294.26,60.34) .. controls (293.63,61.25) and (292.39,61.49) .. (291.48,60.86) -- cycle ;
\draw  [color={rgb, 255:red, 126; green, 211; blue, 33 }  ,draw opacity=1 ][line width=3]  (212.6,75.65) .. controls (211.69,75.02) and (211.46,73.78) .. (212.09,72.87) .. controls (212.71,71.96) and (213.95,71.72) .. (214.87,72.35) .. controls (215.78,72.97) and (216.01,74.22) .. (215.39,75.13) .. controls (214.76,76.04) and (213.52,76.27) .. (212.6,75.65) -- cycle ;
\draw  [color={rgb, 255:red, 74; green, 144; blue, 226 }  ,draw opacity=1 ][line width=3]  (259.51,107.68) .. controls (258.59,107.05) and (258.36,105.81) .. (258.99,104.9) .. controls (259.61,103.99) and (260.86,103.75) .. (261.77,104.38) .. controls (262.68,105) and (262.91,106.25) .. (262.29,107.16) .. controls (261.66,108.07) and (260.42,108.3) .. (259.51,107.68) -- cycle ;
\draw    (246.84,25.53) .. controls (286.84,-4.47) and (306.84,55.53) .. (346.84,25.53) ;
\draw    (292.61,59.21) .. controls (332.61,29.21) and (352.61,89.21) .. (392.61,59.21) ;
\draw    (260.64,106.03) .. controls (300.64,76.03) and (320.64,136.03) .. (360.64,106.03) ;
\draw   (477.74,76.12) -- (509.71,29.3) -- (556.61,61.33) -- (524.64,108.15) -- cycle ;
\draw  [color={rgb, 255:red, 74; green, 144; blue, 226 }  ,draw opacity=1 ][line width=3]  (508.58,30.95) .. controls (507.67,30.33) and (507.43,29.08) .. (508.06,28.17) .. controls (508.68,27.26) and (509.93,27.03) .. (510.84,27.65) .. controls (511.75,28.28) and (511.98,29.52) .. (511.36,30.43) .. controls (510.73,31.34) and (509.49,31.58) .. (508.58,30.95) -- cycle ;
\draw  [color={rgb, 255:red, 74; green, 144; blue, 226 }  ,draw opacity=1 ][line width=3]  (555.48,62.98) .. controls (554.57,62.36) and (554.33,61.11) .. (554.96,60.2) .. controls (555.58,59.29) and (556.83,59.06) .. (557.74,59.68) .. controls (558.65,60.31) and (558.88,61.55) .. (558.26,62.46) .. controls (557.63,63.37) and (556.39,63.61) .. (555.48,62.98) -- cycle ;
\draw  [color={rgb, 255:red, 126; green, 211; blue, 33 }  ,draw opacity=1 ][line width=3]  (427.54,78.31) .. controls (426.63,77.69) and (426.4,76.45) .. (427.02,75.53) .. controls (427.65,74.62) and (428.89,74.39) .. (429.8,75.01) .. controls (430.71,75.64) and (430.95,76.88) .. (430.32,77.8) .. controls (429.7,78.71) and (428.45,78.94) .. (427.54,78.31) -- cycle ;
\draw  [color={rgb, 255:red, 74; green, 144; blue, 226 }  ,draw opacity=1 ][line width=3]  (523.51,109.8) .. controls (522.59,109.17) and (522.36,107.93) .. (522.99,107.02) .. controls (523.61,106.11) and (524.86,105.87) .. (525.77,106.5) .. controls (526.68,107.12) and (526.91,108.37) .. (526.29,109.28) .. controls (525.66,110.19) and (524.42,110.42) .. (523.51,109.8) -- cycle ;
\draw    (510.84,27.65) .. controls (550.84,-2.35) and (570.84,57.65) .. (610.84,27.65) ;
\draw    (556.61,61.33) .. controls (596.61,31.33) and (616.61,91.33) .. (656.61,61.33) ;
\draw    (524.64,108.15) .. controls (564.64,78.15) and (584.64,138.15) .. (624.64,108.15) ;
\draw  [draw opacity=0] (427.02,75.53) .. controls (432.37,67.3) and (441.65,61.86) .. (452.19,61.86) .. controls (462.99,61.86) and (472.45,67.56) .. (477.74,76.12) -- (452.19,91.86) -- cycle ; \draw   (427.02,75.53) .. controls (432.37,67.3) and (441.65,61.86) .. (452.19,61.86) .. controls (462.99,61.86) and (472.45,67.56) .. (477.74,76.12) ;  
\draw  [draw opacity=0] (479.04,75.5) .. controls (473.86,84.5) and (464.15,90.55) .. (453.02,90.55) .. controls (441.91,90.55) and (432.21,84.51) .. (427.02,75.53) -- (453.02,60.55) -- cycle ; \draw   (479.04,75.5) .. controls (473.86,84.5) and (464.15,90.55) .. (453.02,90.55) .. controls (441.91,90.55) and (432.21,84.51) .. (427.02,75.53) ;  
\draw  [color={rgb, 255:red, 208; green, 2; blue, 27 }  ,draw opacity=1 ][line width=3]  (452.6,78.65) .. controls (451.69,78.02) and (451.46,76.78) .. (452.09,75.87) .. controls (452.71,74.96) and (453.95,74.72) .. (454.87,75.35) .. controls (455.78,75.97) and (456.01,77.22) .. (455.39,78.13) .. controls (454.76,79.04) and (453.52,79.27) .. (452.6,78.65) -- cycle ;
\draw (181,64.4) node [anchor=north west][inner sep=0.75pt]    {$=$};
\draw (401,60.4) node [anchor=north west][inner sep=0.75pt]    {$-$};
\end{tikzpicture}

Thus, the total contribution rewrites as
\[\frac1{24}\<\!\<\bar{x}^{2c}_{1,-1}, \bar{y}_{1,-1}, \bar{y}_{1,-1}, \bar{y}_{1,-1}\>\!\>_{1,4_1},\]
where
\[\bar{x}^{2c}_{1,-1} = \bar{x}_{1,-1} - \sum_{\alpha,\beta} \phi_\alpha G_1^{\alpha\beta}\<\!\<\phi_\beta,\bar{t}_{2,1},\bar{x}_{1,-1}\>\!\>_{0,1_1+1_2+1_1}.\]

\subsection{Total contribution in Case 2}
\

\begin{lemma}
\[\begin{array}{ll}
&\<\!\<(\bar{x}^{2a}_{1,-1}+\bar{x}^{2b}_{1,-1})(-\bar{L}-1)+\bar{x}^{2c}_{1,-1}, \bar{y}_{1,-1}, \bar{y}_{1,-1}, \bar{y}_{1,-1}\>\!\>_{1,4_1} \\
=& Res_{-1} \<\!\<\dfrac{\y^L_2(q)}{1-q^{-1}L}, \y_2(q), \y_2(q), \y_2(q)\>\!\>_{1,4_1}\dfrac{dq}q.
\end{array}\]
\end{lemma}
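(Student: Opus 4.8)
The plan is to evaluate the right-hand side directly, by expanding the integrand in the local coordinate $w=q+1$ at $q=-1$ (the expansion variable attached to the eigenvalue $\zeta=-1$ in Section 1.4) and then reading off the two graded pieces that reproduce Cases $2a$--$2b$ and Case $2c$ separately.

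First I would record the local expansions that feed the residue. Because the order-$2$ symmetry acts on the cotangent line of every level-$1$ marked point with eigenvalue $-1$, the class $\mu:=1+\bar{L}$ is nilpotent on the fixed locus $\M^{-1}_{1,4}(X,d)$, and one has the bookkeeping identity
\[\frac{1}{1-q^{-1}\bar{L}}\,\frac{dq}{q}=\frac{dw}{w-\mu},\qquad -\mu=-\bar{L}-1,\]
so the geometric factor supplies exactly the descendant class $-\bar{L}-1$ that appears in Cases $2a$ and $2b$. In parallel, expanding $\bar{\t}^{new}_2$ and $\bar{\x}_1$ about $q=-1$ and comparing with the definitions of $\bar{x}^{2a}_{1,-1},\bar{x}^{2b}_{1,-1},\bar{x}^{2c}_{1,-1}$ shows that $\y_2^L(q)$ has a simple pole there, with
\[\y_2^L(q)=\frac{A}{w}+B+O(w),\quad A=\bar{x}_{1,-1}-\bar{x}^{2c}_{1,-1},\quad B=\bar{x}_{1,-1}-\bar{x}'_{1,-1}+\bar{x}^{2a}_{1,-1}+\bar{x}^{2b}_{1,-1},\]
whereas each of the three remaining slots is regular with $\y_2(-1)=\bar{y}_{1,-1}$, which is Proposition 1.1 at $\zeta=-1$, $r=2$.

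Next I would extract $Res_{w=0}$ by multiplying the Laurent expansion of $\y_2^L$, the series $\tfrac{1}{w-\mu}=\sum_{k\ge 0}\mu^k w^{-k-1}$, and the Taylor expansions of the three $\y_2(q)$ factors, and grading the outcome by powers of $\mu$. The dilaton super-trace evaluations of Section 1.5 (items 1--3) and the $0$-dimensionality of each configuration space (used in the Modification steps of Cases $2a$ and $2c$) decide which terms survive: the base-curve super-trace is nonzero only when the dimension is saturated, either by a single factor $\mu$ at a level-$1$ point or by a $(\bar{L}-1)^2$ on a pair. Accordingly the $\mu^1$-graded part of the residue collects the insertions realizing Cases $2a$ and $2b$, matching $\<\!\<(\bar{x}^{2a}_{1,-1}+\bar{x}^{2b}_{1,-1})(-\bar{L}-1),\bar{y}_{1,-1},\bar{y}_{1,-1},\bar{y}_{1,-1}\>\!\>_{1,4_1}$, while the $\mu^0$-graded part collects those realizing Case $2c$, matching $\<\!\<\bar{x}^{2c}_{1,-1},\bar{y}_{1,-1},\bar{y}_{1,-1},\bar{y}_{1,-1}\>\!\>_{1,4_1}$; their sum is the left-hand side.

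The main obstacle is this final matching. The residue forces the subleading Taylor coefficients $\y_2'(-1),\y_2''(-1)$ of the three slots to enter, paired against the negative-power tails $A/w^2,\,A/w^3$ produced by the pole of $\y_2^L$ against $\tfrac{1}{w-\mu}$, and this generates cross-terms such as $\<\!\<\mu\,A,\y_2'(-1),\bar{y}_{1,-1},\bar{y}_{1,-1}\>\!\>_{1,4_1}$. I expect these to collapse to $\bar{y}_{1,-1}$ in every non-first slot only after invoking the genus-$0$ string and dilaton equations of Section 1.1 together with the splitting-axiom interpretation of the $\sim$-chains behind $\bar{y}_{1,-1}$ (Proposition 1.1), exactly in the style of the Modification steps of Cases $2a$ and $2c$. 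Checking that this reorganization yields precisely the coefficients $\bar{x}^{2a}_{1,-1}+\bar{x}^{2b}_{1,-1}$ and $\bar{x}^{2c}_{1,-1}$, with no leftover descendant or derivative contributions, is where the real work lies.
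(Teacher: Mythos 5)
Your overall strategy is the same as the paper's: work in the local variable $w=q+1$, use the kernel identity $\frac{1}{1-q^{-1}\bar{L}}\frac{dq}{q}=\frac{dw}{w-\mu}$ with $\mu=1+\bar{L}$ nilpotent on the fixed locus (so the residue trades powers of $w$ for powers of $\bar{L}+1$ in the first slot), record $\y_2(-1)=\bar{y}_{1,-1}$, and truncate by the dimension argument. But there is a genuine error at the center of your expansion: $\y_2^L(q)$ has \emph{no} pole at $q=-1$. Since $\tau$ is chosen so that $\bar{\t}_2(1)=0$, the function $-\bar{\t}_2(q)/(1-q)$ is regular at $q=1$, and $\bar{\t}^{new}_2$ is by construction its re-centering at $q=-1$; hence $\bar{\t}^{new}_2(q)=\bar{t}_{2,1}+\bar{t}'_{2,1}(-q-1)+O((-q-1)^2)$ is regular there (the coefficient $\bar{t}_{2,1}$ is a derivative-type coefficient of $\bar{\t}_2$ at $q=1$, not the value $\bar{\t}_2(1)$, which vanishes --- this misreading is what produced your pole). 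Consequently $\y_2^L$ is regular at $q=-1$, and the whole lemma rests on identifying its two Taylor coefficients,
\[
\y_2^L(-1)=\bar{x}^{2c}_{1,-1},\qquad
\text{coefficient of }(-q-1)\ =\ \bar{x}^{2a}_{1,-1}+\bar{x}^{2b}_{1,-1},
\]
which after the substitution $q\mapsto \bar{L}$ enacted by the residue produce exactly the two pieces of the left-hand side. Your pole breaks the calculus rather than organizing it: since $\mu$ is nilpotent, the residues at $w=0$ and $w=\mu$ cannot be separated, and the total residue of $(A/w)\,dw/(w-\mu)$ around both is zero, so your $A$ contributes nothing; meanwhile your constant term $B$ is not $\bar{x}^{2c}_{1,-1}$, so the $\mu$-graded matching you assert ($\mu^1$ part giving Cases $2a$,$2b$ and $\mu^0$ part giving Case $2c$) cannot close.

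Separately, the cross terms you defer as ``where the real work lies'' are precisely the remaining content of the lemma, and the paper disposes of them concretely, not by string/dilaton manipulations: after substituting $q\mapsto\bar{L}$, the linear coefficient of $\y_2$ in the three non-first slots pairs with $(-\bar{L}-1)$ in the first slot, and these terms are handled by the zero-dimensionality swap identity established at the end of Case $2a$,
\[
\<\!\<\bar{x}^{2a}_{1,-1}(-\bar{L}-1), \bar{y}_{1,-1}, \bar{y}_{1,-1}, \bar{y}_{1,-1}\>\!\>_{1,4_1}
= \<\!\<'\bar{x}^{2a}_{1,-1}(-\bar{L}-1), \bar{y}'_{1,-1}, \bar{y}_{1,-1}, \bar{y}_{1,-1}\>\!\>_{1,4_1}
\]
(note $'\bar{x}^{2a}_{1,-1}=\bar{x}^{2c}_{1,-1}$), together with the classification of non-vanishing super-traces in Section 1.4 and the vanishing of anything containing $(-\bar{L}-1)^2$. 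So the proposal both rests on an incorrect expansion of $\y_2^L$ and leaves open the one step that actually requires an argument.
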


\begin{proof}
For the right-hand side, recall that
\[\begin{array}{ll}
\y_2(q)&= \bar{\x}_1(q)-\dsum_{\alpha,\beta}\phi_\alpha G_1^{\alpha\beta}\<\!\<\phi_\beta,\dfrac{\partial \tau_2}{\partial t_{2,0}^1}-1, \bar{\x}_1(q)\>\!\>_{0,1_1+1_2+1_1}\\
\y^L_2(q)&= \bar{\x}_1(q)-\dsum_{\alpha,\beta}\phi_\alpha G_1^{\alpha\beta}\<\!\<\phi_\beta,\bar{\t}^{new}_2(q), \bar{\x}_1(q)\>\!\>_{0,1_1+1_2+1_1}\\
\bar{\t}^{new}_2(q-1) & = -\bar{\t}_2(q+1)/q.
\end{array}\]
Expanding $\bar{\t}_2^{new}(q)$ at $q = -1$ then gives 
\[\bar{\t}_2^{new}(q) = \bar{t}_{2,1} + \bar{t}_{2,1}' (-q - 1) + O((-q - 1)^2).\]

One check
\[\begin{array}{llll}
\y_2(q)& = \bar{y}_{1,-1} & + '\bar{x}_{1,-1}^{2a} (-q-1) & + O((-q-1)^2),\\
\y^L_2(q)& = \bar{x}_{1,-1}^{2c} & + (\bar{x}_{1,-1}^{2a}+\bar{x}_{1,-1}^{2b}) (-q-1) & + O((-q-1)^2).
\end{array}\]

As in \cite{Tang2}, when we multiply the first input by $1/(1 - q^{-1}L)$ and then take the corresponding residue, the effect is to replace each occurrence of $-q - 1$, arising from the products of $\bar{\x}$'s, by $-L - 1$ in the first input.
By dimension argument, everything with $(-L-1)^2$ vanishes, and it is straightforward to check that both sides coincide, using
\[\<\!\<\bar{x}^{2a}_{1,-1}(-\bar{L}-1), \bar{y}_{1,-1}, \bar{y}_{1,-1}, \bar{y}_{1,-1}\>\!\>_{1,4_1}
= \<\!\<'\bar{x}^{2a}_{1,-1}(-\bar{L}-1), \bar{y}'_{1,-1}, \bar{y}_{1,-1}, \bar{y}_{1,-1}\>\!\>_{1,4_1}\]
\end{proof}

Hence, the total contribution is 
\[\begin{array}{ll}
&Res_{-1} \dfrac1{24} \<\!\<\dfrac{\y^L_2(q)}{1-q^{-1}L}, \y_2(q), \y_2(q), \y_2(q)\>\!\>_{1,4_1}\dfrac{dq}q\\
=&Res_{0,\infty} \dfrac1{24} \<\!\<\dfrac{\y^L_2(x^{-1})}{1-xL}, \y_2(x^{-1}), \y_2(x^{-1}), \y_2(x^{-1})\>\!\>_{1,4_1}\dfrac{dx}x.
\end{array}\]

\subsection{Cases 3,4,5}
\

In Case $3\pm$, the non-vanishing correlators are
\[\begin{array}{ll}
&\dsum_{\ell_4 \geq 0} \dfrac1{2!1!\ell_4!} \dsum_{\bar{\ell}_4,d} \frac{Q^d}{\bar{\ell}_4!} \cdot \left(\sum_{\pm} \<\bar{x}_{1,\pm i},\bar{x}_{1,\pm i};\bar{x}_{2,-1};\bar{t}_{4,1}(\bar{L}-1),\cdots ;\tau_4, \cdots \>_{1,2_1+1_2+(\ell_4)_4+(\bar{\ell}_4)_4,d}\right)\\
=& \dsum_{\ell_4 \geq 0, \pm}  \dfrac1{c_0(str_{h_4} N^\ast_{\M_{1,4+4\ell_4}^{\pm i}/\M_{1,4+4\ell_4}})} str_{h_4} H^\ast(\M^{\pm i}_{1,4+4\ell_4},W_{4,\ell_4}) \cdot str_{h_4} H^\ast (\pi_4^{-1}(\{C_4\}), (V^\pm_{4,\ell_4})) \\
=& \dsum_{\ell_4 \geq 0, \pm}  \dfrac1{c_0(str_{h_4} N^\ast_{\M_{1,4}^{\pm i}/\M_{1,4}})} \cdot \dfrac14 \cdot str_{h_4} H^\ast (\pi_4^{-1}(\{C_4\}), (V^\pm_{4,\ell_4})),
\end{array}\]
where 
\[\begin{array}{ll}
W_{4,\ell_4} & =  \dfrac1{4 \cdot 4^{\ell_4}\ell_4 !} \dprod_{i=1}^{\ell_4} (L_{4i+1} \cdots L_{4i+4}-1);\\
V^\pm_{4,\ell_4} &= \dsum_{\bar{\ell}_4,d} \frac{Q^d}{4^{\bar{\ell}_4}\bar{\ell}_4!} \left( ev_1^\ast \bar{x}_{1,\pm i}ev_2^\ast \bar{x}_{1,\pm i}ev_3^\ast \bar{x}_{2,-1}ev_4^\ast \bar{x}_{2,-1} \dprod_{i=1}^{\ell_4} (ev_{4i+1}^\ast \bar{t}_{4,1} \cdots ev_{4i+4}^\ast \bar{t}_{4,1}) \cdot (\bar{ft}_4)_\ast \mathcal{T}\right).
\end{array}\]

Computing with the following base curve,

\begin{tikzpicture}[x=0.75pt,y=0.75pt,yscale=-1,xscale=1]
\draw   (86.68,25.85) -- (87.65,103.15) -- (48.52,65.46) -- cycle ;
\draw  [color={rgb, 255:red, 74; green, 144; blue, 226 }  ,draw opacity=1 ][line width=3]  (85.65,101.15) .. controls (85.65,100.05) and (86.55,99.15) .. (87.65,99.15) .. controls (88.76,99.15) and (89.65,100.05) .. (89.65,101.15) .. controls (89.65,102.26) and (88.76,103.15) .. (87.65,103.15) .. controls (86.55,103.15) and (85.65,102.26) .. (85.65,101.15) -- cycle ;
\draw  [color={rgb, 255:red, 74; green, 144; blue, 226 }  ,draw opacity=1 ][line width=3]  (84.68,25.85) .. controls (84.68,24.74) and (85.58,23.85) .. (86.68,23.85) .. controls (87.79,23.85) and (88.68,24.74) .. (88.68,25.85) .. controls (88.68,26.95) and (87.79,27.85) .. (86.68,27.85) .. controls (85.58,27.85) and (84.68,26.95) .. (84.68,25.85) -- cycle ;
\draw  [color={rgb, 255:red, 126; green, 211; blue, 33 }  ,draw opacity=1 ][line width=3]  (48.52,65.46) .. controls (48.52,64.35) and (49.42,63.46) .. (50.52,63.46) .. controls (51.63,63.46) and (52.52,64.35) .. (52.52,65.46) .. controls (52.52,66.56) and (51.63,67.46) .. (50.52,67.46) .. controls (49.42,67.46) and (48.52,66.56) .. (48.52,65.46) -- cycle ;
\draw    (86.68,25.85) .. controls (126.68,-4.15) and (146.68,55.85) .. (186.68,25.85) ;
\end{tikzpicture}

\noindent we find
\[\sum_{\ell_4} str_{h_4} H^\ast (\pi_4^{-1}(\{C_4\}), V^\pm_{4,\ell_4}) = \<\!\<\bar{y}_{1,\pm i}, \bar{x}_{1,\pm i}, \bar{t}_{2,-1}\>\!\>_{1,2_1+1_2}.\]
By writing it as a residue, their sum becomes
\[\begin{array}{ll}
&Res_{\pm i} \dfrac14\<\!\<\dfrac{\y_2(q)}{1-q^{-1}L}, \bar{\x}_1(q), \bar{\x}_2(q)\>\!\>_{1,2_1+1_2} \dfrac{dq}q \\
=& Res_{0,\infty} \dfrac14 \<\!\<\dfrac{\y_4(x^{-1})}{1-x\bar{L}}, \bar{\x}_1(x^{-1}), \bar{\x}_2(x^{-2})\>\!\>_{1,2_1+1_2} \dfrac{dx}x.
\end{array}\]

Similarly, in cases $4\pm, 5\pm$, the total contributions are

\[\begin{array}{ll}
&Res_{0,\infty} \dfrac16 \<\!\<\dfrac{\y_3(x^{-1})}{1-x\bar{L}}, \bar{\x}_1(x^{-1}),\bar{\x}_1(x^{-1})\>\!\>_{1,3_1} \dfrac{dx}x, \\
&Res_{0,\infty} \dfrac16 \<\!\<\dfrac{\y_6(x^{-1})}{1-x\bar{L}}, \bar{\x}_2(x^{-2}), \bar{\x}_3(x^{-3}) \>\!\>_{1,1_1+1_2+1_3} \dfrac{dx}x,
\end{array}\]
respectively.
This proves Theorem 1.

\author{Dun Tang, Department of Mathematics, University of California, Berkeley, 
1006 Evans Hall, University Drive, Berkeley, CA 94720.
E-mail address: dun\_tang@math.berkeley.edu}
\end{document}